\documentclass[11pt]{amsart}
\usepackage[a4paper,left=1.15in,right=1.15in,top=1.15in,bottom=1.15in]{geometry}
\usepackage{amsmath,amssymb,amsthm,mathtools}
\usepackage{enumitem}
\usepackage{graphicx}
\usepackage{subcaption}
\usepackage{placeins}
\usepackage{microtype}
\usepackage{hyperref}
\hypersetup{
  colorlinks=true,
  linkcolor=blue,
  citecolor=blue,
  urlcolor=blue
}
\usepackage{tikz}
\usetikzlibrary{arrows.meta}

\newtheorem{theorem}{Theorem}[section]
\newtheorem{corollary}[theorem]{Corollary}
\newtheorem{proposition}[theorem]{Proposition}
\newtheorem{lemma}[theorem]{Lemma}

\theoremstyle{definition}
\newtheorem{definition}[theorem]{Definition}

\newtheoremstyle{boldremark}
  {3pt}
  {3pt}
  {\normalfont}
  {}
  {\bfseries}
  {.}
  { }
  {}

\theoremstyle{boldremark}
\newtheorem{remark}[theorem]{Remark}

\theoremstyle{plain}

\newcommand{\Th}{\operatorname{Th}}
\newcommand{\Fix}{\operatorname{Fix}}
\newcommand{\Aut}{\operatorname{Aut}}
\newcommand{\Bur}{\overline{\mathcal B}}
\newcommand{\Dhalf}{\mathfrak D}
\newcommand{\ZZ}{\mathbb Z}
\newcommand{\QQ}{\mathbb Q}

\newcommand{\doteqeq}{\doteq}
\newcommand{\Res}{\operatorname{Res}}
\newcommand{\sqfree}{\operatorname{sf}}

\title[Equivariant rational sliceness of Turk's head knots]
{Equivariant rational sliceness and Klein amphichirality of odd-stranded Turk's head knots}

\author{Suman Saurabh}
\email{realsumansaurabh@gmail.com}

\subjclass[2020]{57K10, 57K14}
\keywords{Turk's head knots, equivariant rational sliceness, amphichirality,
strong inversion, Gauss diagrams, abstract link diagrams, disk-band surfaces,
Burau representation, Alexander polynomial}

\begin{document}

\begin{abstract}
We establish a sharp parity dichotomy for the equivariant \(\mathbb Q\)-sliceness
and Klein amphichirality of odd-stranded Turk's head knots.  When the twisting
parameter \(q\) is odd, we construct a commuting pair of ambient involutions by
lifting explicit symmetries of the signed, arrowed Gauss diagram through the
associated abstract link diagram and supporting sphere.  This proves that the
knots are Klein amphichiral and hence equivariantly \(\mathbb Q\)-slice.  When
\(q\) is even, we prove that the knots are not equivariantly \(\mathbb Q\)-slice
with respect to any strong inversion.  The obstruction is the equivariant
Fox--Milnor square condition of Di Prisa--\c{S}avk: we show that the Alexander
polynomial is not a Laurent square. The even-\(q\) argument combines a Burau square factorization arising from
Garside conjugacy with a mod-\(2\) Seifert-matrix computation.
\end{abstract}

\maketitle

\section{Introduction}\label{sec:introduction}

Equivariant rational sliceness is a refinement of rational sliceness for
strongly invertible knots. For a strongly invertible knot \((K,\rho)\), the
pair is equivariantly \(\mathbb Q\)-slice if \(\rho\) extends over a rational
homology \(4\)-ball in which \(K\) bounds a properly embedded invariant disk.
We shall use the following precise formulation.

\begin{definition}\label{def:eq-Q-slice}
Let \((K,\rho)\) be a strongly invertible knot. We say that
\((K,\rho)\) is \emph{equivariantly \(\mathbb Q\)-slice} if there exist
a compact connected oriented smooth \(4\)-manifold \(W\), a smoothly and
properly embedded disk \(\Delta\subset W\), and an orientation-preserving
smooth involution
\[
        \widetilde{\rho}:W\longrightarrow W
\]
such that
\[
        \partial W=S^3,
        \qquad
        H_*(W;\mathbb Q)\cong H_*(B^4;\mathbb Q),
\]
and
\[
        \partial\Delta=K,
        \qquad
        \widetilde{\rho}|_{\partial W}=\rho,
        \qquad
        \widetilde{\rho}(\Delta)=\Delta.
\]
The manifold \(W\) is then called a \(\mathbb Q\)-homology \(4\)-ball.
\end{definition}

For \(p\geq2\), set
\[
        A_p=
        \prod_{i=1}^{p-1}\sigma_i^{(-1)^{i-1}}
        \in B_p,
\]
and define
\[
        \Th(p,q)=\widehat{A_p^q}.
\]
When \(\gcd(p,q)=1\), this is a knot. An oriented knot \(K\subset S^3\) is called \emph{strongly invertible}
if there exists an orientation-preserving involution
\(\rho:S^3\to S^3\) such that
\[
        \rho(K)=-K.
\]
In this situation,
\[
        \Fix(\rho)\cong S^1,
        \qquad
        \Fix(\rho)\cap K\cong S^0.
\]

An oriented knot \(K\subset S^3\) is called
\emph{strongly negative amphichiral} if there exists an
orientation-reversing involution
\(\tau:S^3\to S^3\) such that
\[
        \tau(K)=-K.
\]
The fixed set of such an involution is homeomorphic to either \(S^0\)
or \(S^2\). In the construction below, \(\tau\) is of the \(S^0\)-fixed-set
type and satisfies
\[
        K\cap\Fix(\tau)=\Fix(\tau).
\]
In this case, we shall also say that \(\tau\) is a strongly negative
amphichiral involution for \(K\). 

Equivariant sliceness of strongly invertible knots goes back to
Sakuma~\cite{Sakuma1986}, while the rational sliceness of strongly negative
amphichiral knots was established by Kawauchi~\cite{Kawauchi2009}; Levine later
showed that the resulting Kawauchi manifold is independent of the knot up to
diffeomorphism~\cite{Levine2023}.

Following Di Prisa and \c{S}avk, a knot is
Klein amphichiral if it admits a strong inversion and a strongly negative
amphichiral involution which commute.

Di Prisa and \c{S}avk studied Klein amphichirality in connection with
equivariant \(\mathbb Q\)-sliceness of strongly invertible knots. They proved
that every Klein amphichiral knot is equivariantly \(\mathbb Q\)-slice in a
single \(\mathbb Q\)-homology \(4\)-ball~\cite{DPSequivariant}. They also proved
an equivariant Fox--Milnor obstruction: if a strongly invertible knot is
equivariantly \(\mathbb Q\)-slice, then its Alexander polynomial is a Laurent
square~\cite{DPSequivariant}. Their survey on Turk's head knots and links
records that \(\Th(p,q)\) is always strongly invertible and is strongly negative
amphichiral when \(p\) is odd~\cite{DPSsurvey}. Thus, for odd strand number,
the remaining question is not the separate existence of these symmetries, but
whether they can be made compatible in the sense needed for Klein
amphichirality and equivariant rational sliceness.

The answer depends sharply on the parity of \(q\). When \(q\) is odd, we show
that the strong inversion and the strongly negative amphichiral symmetry can be
chosen to commute, giving Klein amphichirality. When \(q\) is even, the same
conclusion is impossible: the equivariant Fox--Milnor obstruction of Di
Prisa--\c{S}avk forces the Alexander polynomial of any equivariantly
\(\mathbb Q\)-slice strongly invertible knot to be a Laurent square, and we
prove that \(\Delta_{\Th(p,q)}(t)\) is not a Laurent square for even \(q\).
Thus the parity of \(q\) separates the constructive and obstructive sides of
the problem.

The main result of this paper gives a complete parity dichotomy for odd-stranded
Turk's head knots. In the odd-\(q\) case we construct an explicit
Klein-amphichiral \(V_4\)-symmetry, while in the even-\(q\) case we obstruct
equivariant \(\mathbb Q\)-sliceness by proving that the full Alexander
polynomial is not a Laurent square. For the three-stranded family
\(\Th(3,q)\), the corresponding dichotomy was previously established by
Di Prisa--\c{S}avk~\cite{DPSequivariant} using different methods; the present
paper extends this dichotomy to all odd strand numbers.

\begin{theorem}[Parity dichotomy]\label{thm:parity-dichotomy}
Let \(p\geq3\) be odd, let \(q\geq2\), and suppose \(\gcd(p,q)=1\). Then:
\begin{enumerate}[label=\textup{(\roman*)}]
    \item if \(q\) is odd, then \(\Th(p,q)\) is Klein amphichiral. In particular,
    with respect to the strong inversion constructed below, \(\Th(p,q)\) is
    equivariantly \(\mathbb Q\)-slice;
    \item if \(q\) is even, then \(\Th(p,q)\) is not equivariantly
    \(\mathbb Q\)-slice with respect to any strong inversion. In particular,
    \(\Th(p,q)\) is not Klein amphichiral.
\end{enumerate}
\end{theorem}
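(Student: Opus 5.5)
The proof splits along the two parts of the statement, and both halves reduce to results of Di Prisa--\c{S}avk~\cite{DPSequivariant}. For (i) it suffices to exhibit a strong inversion \(\rho\) and a strongly negative amphichiral involution \(\tau\) of \(\Th(p,q)\) with \(\rho\tau=\tau\rho\). Klein amphichiral knots are equivariantly \(\mathbb Q\)-slice with respect to the given strong inversion, so this yields both conclusions of (i). For (ii), by the equivariant Fox--Milnor condition it suffices to show that \(\Delta_{\Th(p,q)}(t)\) is not of the form \(\pm t^k f(t)^2\) for \(q\) even. This condition depends only on \(K\) and not on the inversion, so the conclusion holds for every strong inversion. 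Non-Klein-amphichirality then follows from the contrapositive of the first Di Prisa--\c{S}avk theorem.

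\textbf{Construction for odd \(q\).} I would work with the signed, arrowed Gauss diagram of the closure of \(A_p^q\), with \(q(p-1)\) crossings arranged as a \(q\times(p-1)\) grid.
\begin{itemize}
\item Cyclic shift of the braid word by one letter composed with the flip \(\sigma_i\mapsto\sigma_{p-i}\) has order two on the diagram. Because \(p\) is odd, \(p-1\) is even, so the flip preserves the sign pattern \((-1)^{i-1}\) up to the reversal needed for an inversion.
\item Reversing the braid word combined with a half-turn of the \(q\) blocks gives a symmetry that reverses orientation and negates all signs. Negating signs is the mirror operation, so this is an orientation-reversing symmetry sending \(K\) to \(-K\).
\item The parity of \(q\) enters here. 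The half-turn of the \(q\) blocks has a fixed block exactly when \(q\) is odd. I would check combinatorially that the two Gauss-diagram involutions then commute and have the right fixed-point data: two fixed chord endpoints for \(\rho\), and an isolated pair of fixed points on \(K\) for \(\tau\).
\end{itemize}
I would then lift these symmetries to the abstract link diagram, a surface built from disks at crossings and bands. Since the diagram is alternating and realizable on \(S^2\), the supporting surface caps off to a sphere. The symmetries extend over the capping disks: the rotation extends as a rotation of \(S^3=\) suspension, and \(\tau\) as the composition with the antipodal-type reflection swapping the two hemispheres. I would verify that the extensions still commute and that the fixed sets are \(S^1\) and \(S^0\) respectively.

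\textbf{Obstruction for even \(q\).} I would compute \(\Delta\) through the reduced Burau representation:
\[
\Delta_{\Th(p,q)}(t)\doteq \frac{1-t}{1-t^p}\det\bigl(I-\Bur(A_p)^q\bigr).
\]
Write \(q=2m\). Then
\[
I-\Bur(A_p)^{2m}=\bigl(I-\Bur(A_p)^m\bigr)\bigl(I+\Bur(A_p)^m\bigr).
\]
Garside conjugacy should relate \(A_p\) to its image under the symmetry \(\sigma_i\mapsto\sigma_i^{-1}\) with \(t\mapsto t^{-1}\). This makes the two factors reciprocal-related, producing a factorization \(\Delta\doteq g(t)\,h(t)\) with \(g\) close to \(\Delta_{\Th(p,m)}\)-type. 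A square would then force \(g\) and \(h\) to share all factors to even multiplicity.

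Rather than control this globally, I would settle non-squareness by reducing mod \(2\) with the mod-\(2\) Seifert matrix computation. Let \(V\) be the Seifert matrix of the checkerboard surface and compare \(\Delta(t)\bmod 2\) with the square of any candidate \(f\). Squares mod \(2\) are polynomials in \(t^2\), that is, \(f(t)^2=f(t^2)\) over \(\mathbb F_2\). So it suffices to show that \(\Delta\bmod 2\) has a nonzero odd-degree coefficient after normalizing by \(t^k\). Via the factorization, this reduces to showing that the two Burau factors differ mod \(2\).

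\textbf{Main obstacle.} I expect the hardest step to be the last one. Identifying an explicit odd-degree coefficient of \(\Delta\bmod 2\) uniformly in odd \(p\) and even \(q\) requires a careful determinant evaluation of \(V+V^T\) or \(tV-V^T\) over \(\mathbb F_2\). I would first attempt it by a recursive block structure in \(q\).
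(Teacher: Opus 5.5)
Your overall architecture is the paper's: Gauss-diagram involutions, the capped abstract link diagram $\cong S^2$, and suspension to $S^3$ for (i); the Burau formula plus a mod-$2$ argument for (ii). Both halves, however, have genuine gaps.

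In (i), the map you propose as $\rho$ cannot be a strong inversion. A cyclic shift of the word and the index flip $\sigma_i\mapsto\sigma_{p-i}$ both preserve the cyclic order of the letters. So their composite, whenever it is a symmetry of the diagram, sends $K$ to $+K$, not $-K$. If the shift is chosen so that the composite squares to the identity (possible only for odd $q$), it is the paper's $RT\colon c_{j,r}\mapsto c_{j+r+\lambda-(p-2),\,p-2-r}$. That is the orientation-reversing, knot-orientation-preserving element of the Klein group. The actual strong inversion is $R(u_{j,r})=v_{\lambda-j-r,r}$. It keeps the generator index, reverses the Gauss circle, preserves crossing signs, reverses the rotation data, and swaps the $u$- and $v$-endpoints of every chord. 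Hence it fixes no chord endpoint, contrary to your ``two fixed chord endpoints''; its two fixed points on $K$ lie in arc interiors. Moreover, $h_R$ reverses the orientation of $S^2$ and exchanges over- and under-passing branches, so it is the normal flip $s\mapsto -s$ that makes $\widehat R$ orientation-preserving. The parity of $q$ also does not enter through a fixed block. The formulas for $T$ ($j\mapsto -j$, fixing the block $j=0$ for every $q$) and for $R$ (with any integer $\lambda$) make sense for all $q$. Parity enters only through $RT=TR$, which is equivalent to $2\lambda\equiv p-2\pmod q$ and is unsolvable for even $q$ because $p-2$ is odd. As written, you defer the commutation and fixed-point checks for maps that do not have the properties you assign them.

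In (ii), reducing to the statement that $\Delta_{\Th(p,2m)}\bmod 2$ has exponents of both parities is sound. But you leave exactly that step open as the ``main obstacle'', and the heuristics around it are off. The Garside input is $\Dhalf_pA_p\Dhalf_p^{-1}=A_p^{-1}$, so $M_p(t)$ is conjugate to $M_p(t)^{-1}$ at the same $t$, together with $\det M_p=1$; no substitution $t\mapsto t^{-1}$ is involved. Your reduction to ``the two Burau factors differ mod $2$'' is backwards. Over $\mathbb F_2$ one has $I-M^m=I+M^m$, and in any case two factors differing would not preclude a square. The paper's missing idea is to factor at $M^2$ instead: $I-M_p^{2m}=(I-M_p^2)\,Q_m(M_p^2)$. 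Reciprocity of the characteristic polynomial and $Q_m(\lambda^2)Q_m(\lambda^{-2})=S_{m-1}(\lambda+\lambda^{-1})^2$ then give $\det Q_m(M_p^2)=\Res_z(\Psi_p,S_{m-1})^2$. Hence $\Delta_{\Th(p,2m)}\doteq\Delta_{\Th(p,2)}H_{p,m}^2$ over $\ZZ$. The base case comes from the bidiagonal mod-$2$ Seifert matrix, with $\det(L_n+tL_n^T)=1+t+\cdots+t^{p-1}$.

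Your own factorization, read correctly, would also have finished the argument. Modulo $2$, $\det(I-M^{2m})=\det(I+M^m)^2$, so $(1+t+\cdots+t^{p-1})\Delta_{\Th(p,2m)}$ is a square up to a monomial in $\mathbb F_2[t^{\pm1}]$. Since $\Delta(1)=\pm1$, the reduction of $\Delta$ is nonzero. If $\Delta_{\Th(p,2m)}$ were a Laurent square, unique factorization would force $1+t+\cdots+t^{p-1}$ to be a square up to a monomial. That contradicts Lemma~\ref{lem:F2-square-parity}.
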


The two halves of the dichotomy arise from rather different structures.
For odd \(q\), the result is controlled by a compatible \(V_4\)-action on
the Gauss data and its supporting-surface realization. For even \(q\), the
obstruction is independent of the chosen strong inversion and is detected by
a square class of the Alexander polynomial. Thus the parity distinction is
not merely diagrammatic: it separates the existence of a commuting geometric
symmetry from a global concordance obstruction.

The constructive half of Theorem~\ref{thm:parity-dichotomy} is proved by an
explicit construction. More precisely, we prove the following theorem.

\begin{theorem}\label{thm:odd-klein}
Let \(p,q\geq3\) be odd integers with \(\gcd(p,q)=1\). Then
\[
        K_{p,q}=\Th(p,q)=\widehat{A_p^q}
\]
is Klein amphichiral. More precisely, there exist smooth ambient involutions
\[
        \rho,\tau:S^3\longrightarrow S^3
\]
which generate a subgroup
\[
        \langle\rho,\tau\rangle\cong \mathbb Z_2\oplus\mathbb Z_2
\]
and satisfy the following properties:
\begin{enumerate}[label=\textup{(\roman*)}]
    \item \(\rho\) is orientation-preserving and satisfies
    \[
        \rho(K_{p,q})=-K_{p,q},
        \qquad
        \Fix(\rho)\cong S^1,
        \qquad
        \#\bigl(K_{p,q}\cap\Fix(\rho)\bigr)=2;
    \]

    \item \(\tau\) is orientation-reversing and satisfies
    \[
        \tau(K_{p,q})=-K_{p,q},
        \qquad
        \Fix(\tau)\cong S^0,
        \qquad
        K_{p,q}\cap\Fix(\tau)=\Fix(\tau);
    \]

    \item \(\rho\tau=\tau\rho\).
\end{enumerate}
\end{theorem}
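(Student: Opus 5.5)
We will prove Theorem~\ref{thm:odd-klein} by an explicit construction. First we find the two symmetries combinatorially on the diagram, and then we realize them by ambient involutions. Write the closed braid \(\widehat{A_p^q}\) as a \(p\times q\) grid diagram with crossings \(c_{i,j}\), where \(1\le i\le p-1\) and \(j\in\ZZ/q\). The sign of \(c_{i,j}\) is \((-1)^{i-1}\).

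The first step is to record the signed, arrowed Gauss diagram \(G_{p,q}\) of \(K_{p,q}\). Since \(\gcd(p,q)=1\), the single component passes through every crossing twice, so the chords of \(G_{p,q}\) are indexed by the pairs \((i,j)\). The second step is to define two combinatorial involutions of \(G_{p,q}\).

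\begin{enumerate}[label=\textup{(\alph*)}]
\item \(\bar\rho\) comes from rotating the braid axis by \(\pi\) about a horizontal line in the projection plane. Combined with reversal of the braid word and the index change \(i\mapsto p-i\), it sends \(A_p\) to a conjugate of itself. Because \(p\) is odd, \(p-1\) is even, so \(i\mapsto p-i\) preserves the parity of \(i\) and hence the signs of the crossings. On \(G_{p,q}\), \(\bar\rho\) reverses the circle and preserves the signs and the arrows (over/under data).
\item \(\bar\tau\) is the mirror composed with a half-period shift. The map \(i\mapsto p-i\) is again used, and \(j\mapsto j+\tfrac{q}{2}\) is replaced by the reflection \(j\mapsto -j\) on the torus of the grid, so that the result has isolated fixed points. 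It reverses the circle, flips all crossing signs, and swaps over and under. Crossing signs are flipped because \(\Th(p,q)\) for odd \(p\) is isotopic to its mirror, with the alternating pattern \((-1)^{i-1}\) shifted by one index.
\end{enumerate}

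Using that \(p\) and \(q\) are both odd, we then check by index arithmetic that \(\bar\rho\) and \(\bar\tau\) are well-defined involutions of \(G_{p,q}\) and that they commute. We also check the fixed-point counts. The reflection of \(\ZZ/q\) with \(q\) odd fixes exactly one residue class, and \(i\mapsto p-i\) with \(p\) odd has no fixed index. From these facts, \(\bar\rho\) fixes exactly two points of the circle, both at midpoints of arcs. Likewise \(\bar\tau\) fixes exactly two points, antipodal on the circle. This explains why the construction needs \(q\) odd: for even \(q\) the parities fail.

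The third step lifts the \(V_4\)-action to the abstract link diagram. This is the ribbon surface \(F\) built from disks at the crossings and bands along the arcs. The action \(\langle\bar\rho,\bar\tau\rangle\) acts on the disks and bands, and we make it act by isometries on \(F\). We then cap the boundary components of \(F\) with disks to get the supporting closed surface \(\Sigma\). Since \(K_{p,q}\) is alternating and has an alternating diagram on \(S^2\), the supporting surface is \(S^2\). The action extends over the capping disks by coning from their centres.

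The fourth step extends the action to a thickening. We take \(S^3 = \Sigma\times[-1,1]\cup\) two balls. An element is extended by \(\phi\times\mathrm{id}\) if it preserves the arrows, and by \(\phi\times(-\mathrm{id})\) if it swaps over and under. The extension is then coned over the two balls. This gives \(\rho\), which is orientation-preserving, since on \(S^2\) it reverses orientation and also flips the \([-1,1]\) factor. It also gives \(\tau\), which is orientation-reversing.

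The last step identifies the fixed sets. For \(\rho\), the fixed set is the \(S^1\) formed by the axis through the two fixed points on \(S^2\) together with the cone lines, and it meets \(K_{p,q}\) in 2 points. For \(\tau\), which acts as the antipodal-type map on \(S^2\) and flips the interval, the fixed set is the two cone points. These must be arranged to lie on \(K_{p,q}\); we will do this by rescaling so that the fixed arc midpoints are placed at the poles. The two involutions commute because they commute on \(\Sigma\) and the interval factors commute.

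We expect the main obstacle to be the compatibility in the fourth step. The fixed-point data, namely \(\Fix(\tau)\cong S^0\) lying on \(K_{p,q}\), must be realized while the two involutions still commute on the nose. The coning and the interval flips have to be chosen consistently, so that \(\tau\) has isolated fixed points exactly on the knot rather than a fixed \(S^2\). As a first attempt we will take the standard linear \(V_4\subset O(4)\) generated by \(\mathrm{diag}(1,1,-1,-1)\) and \(\mathrm{diag}(-1,-1,-1,1)\), and embed the diagram equivariantly with respect to this action.
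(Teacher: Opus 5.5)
Your overall architecture is the one the paper uses: involutions of the signed, arrowed Gauss diagram, realization on the capped ribbon surface \(S^2\), thickening with a normal flip, and a fixed-point count. Your \(\bar\tau\colon c_{i,j}\mapsto c_{p-i,-j}\) is exactly the paper's \(T\). The gap is the strong inversion.

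\textbf{The strong inversion \(\bar\rho\).} For \(p\) odd, \(i\mapsto p-i\) \emph{reverses} the parity of \(i\) (e.g.\ \(1\mapsto p-1\)). So any \(\bar\rho\) sending \(c_{i,j}\) to some \(c_{p-i,j'}\) flips every crossing sign, just as \(\bar\tau\) does. It therefore cannot come from an orientation-preserving ambient map that reverses the knot. Also, a rotation about an axis in the projection plane must exchange over- and under-passing preimages; you implicitly use this when you give \(\rho\) the factor \(-\mathrm{id}\). So ``preserves the arrows'' is also wrong.

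The missing idea is that a sign-preserving symmetry must fix the generator index and swap the two preimages of each crossing. Write \(u_{j,r},v_{j,r}\) for the preimages of \(c_{r+1,j}\) entering from the lower and higher braid position. The paper takes \(u_{j,r}\mapsto v_{\lambda-j-r,r}\) and \(v_{j,r}\mapsto u_{\lambda-j-r,r}\). Every \(\lambda\in\ZZ/q\) gives a circle-reversing, sign-preserving, over/under-swapping involution. Commuting with \(T\), however, forces \(2\lambda\equiv p-2\pmod q\), and since \(p-2\) is odd this is solvable exactly when \(q\) is odd. That congruence, not the number of fixed residues of \(j\mapsto -j\), is where oddness of \(q\) enters. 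The fixed-point counts (two fixed points, none at marked endpoints) hold for every \(q\). As written, your ``index arithmetic'' step has no correct map to check.

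\textbf{Fixed sets.} A map that is antipodal on \(S^2\) and flips the interval is the antipodal map of \(S^3\). It is orientation-preserving and free, because the two cone points are swapped, not fixed. Since \(\bar\tau\) flips signs and swaps over/under, it preserves the cyclic orders at crossings. Hence its surface map \(h_\tau\) is orientation-preserving, i.e.\ a rotation by \(\pi\), and \(\Fix(\tau)=\Fix(h_\tau)\times\{0\}\) is two points at the level of the knot. Likewise \(\rho=h_\rho\times(-\mathrm{id})\) swaps the two balls, and \(\Fix(\rho)=\Fix(h_\rho)\times\{0\}\) is the mirror circle of the reflection \(h_\rho\); there are no cone lines.

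No rescaling is needed, and the knot, which lives near \(\Sigma\times\{0\}\), could not pass through the cone points anyway. The restriction \(\tau|_K\) is a reflection of the circle, so it has two fixed points on \(K\). These lie in the two-point set \(\Fix(\tau)\), which forces \(K\cap\Fix(\tau)=\Fix(\tau)\). Your linear model \(\mathrm{diag}(1,1,-1,-1)\), \(\mathrm{diag}(-1,-1,-1,1)\) is in fact the right one, provided \(x_3\) is taken as the normal coordinate.

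\textbf{Remaining points.} To make the lifted knot invariant you need a height function \(\eta\) with \(\eta\circ F=-\eta\) for both generators; this is where ``both swap over/under'' is used. Finally, \(\Sigma\cong S^2\) because the cyclic orders are inherited from the planar closed-braid diagram, not because the diagram is alternating.
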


\begin{corollary}\label{cor:equivariant-Q-slice}
Let \(p,q\geq3\) be odd and suppose \(\gcd(p,q)=1\). With respect to the strong
inversion \(\rho\) constructed in Theorem~\ref{thm:odd-klein}, the strongly
invertible knot \((\Th(p,q),\rho)\) is equivariantly \(\mathbb Q\)-slice.
\end{corollary}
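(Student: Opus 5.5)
The corollary follows by combining Theorem~\ref{thm:odd-klein} with the theorem of Di Prisa--\c{S}avk~\cite{DPSequivariant}, which says that every Klein amphichiral knot is equivariantly \(\mathbb Q\)-slice. The only work is to check that the hypotheses of their theorem are met by the specific pair of involutions \(\rho,\tau\) we construct, and that the \(\mathbb Q\)-slicing they produce is equivariant with respect to \(\rho\) itself rather than some other strong inversion.

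First, we invoke Theorem~\ref{thm:odd-klein} for odd \(p,q\geq3\) with \(\gcd(p,q)=1\). This gives commuting involutions \(\rho,\tau\) of \(S^3\) with the following properties:
\begin{itemize}
\item[(a)] \(\rho\) is an orientation-preserving strong inversion: \(\rho(K)=-K\), \(\Fix(\rho)\cong S^1\), and \(\Fix(\rho)\) meets \(K\) in two points.
\item[(b)] \(\tau\) is an orientation-reversing strongly negative amphichiral involution with \(\Fix(\tau)\cong S^0\subset K\).
\item[(c)] \(\langle\rho,\tau\rangle\cong\ZZ_2\oplus\ZZ_2\).
\end{itemize}
This is exactly the definition of Klein amphichirality recalled in the introduction, with the strong inversion in the Klein pair being \(\rho\). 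Di Prisa--\c{S}avk's result is stated for the strongly invertible knot \((K,\rho)\), where \(\rho\) is the strong inversion in a commuting pair. Concretely, their construction equivariantly extends Kawauchi's rational ball built from \(\tau\), using the commuting \(\rho\) to obtain an involution of the ball that preserves the slice disk. We therefore apply it verbatim to obtain \(W\), \(\Delta\) and \(\widetilde\rho\) as in Definition~\ref{def:eq-Q-slice}.

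The one point requiring care is a convention mismatch. Di Prisa--\c{S}avk may phrase Klein amphichirality with the requirement that \(\rho\tau\) be a specific kind of involution, or may use the other fixed-point type for \(\tau\). We will verify that our \(\tau\) is of the \(S^0\)-fixed-set type they require; this holds by Theorem~\ref{thm:odd-klein}(ii). We will also check that their orientation conventions for \(\rho(K)=-K\) and \(\tau(K)=-K\) match ours, which again holds by the theorem. Once these are confirmed, the corollary is immediate. No step here is a real obstacle: all of the substance lives in Theorem~\ref{thm:odd-klein}.
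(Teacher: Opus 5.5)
Your proposal is correct and matches the paper's proof: Theorem~\ref{thm:odd-klein} shows that $\Th(p,q)$ is Klein amphichiral with $\rho$ as the strong inversion of the commuting pair, and Di Prisa--\c{S}avk's theorem that Klein amphichiral knots are equivariantly $\mathbb Q$-slice then gives the conclusion for $(\Th(p,q),\rho)$. Your extra checks on the fixed-set type of $\tau$ and on orientation conventions are sound additional care that the paper leaves implicit.
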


\begin{proof}
By Theorem~\ref{thm:odd-klein}, \(\Th(p,q)\) is Klein amphichiral. Di
Prisa--\c{S}avk proved that every Klein amphichiral knot is equivariantly
\(\mathbb Q\)-slice~\cite{DPSequivariant}. The conclusion follows.
\end{proof}

The obstructive half is a full Alexander-polynomial obstruction, not merely a
determinant obstruction.

\begin{theorem}\label{thm:even-obstruction}
Let \(p\geq3\) be odd and let \(q=2m\) be even. Suppose \(\gcd(p,q)=1\). Then
\[
        \Delta_{\Th(p,q)}(t)
\]
is not a Laurent square up to multiplication by a unit in
\(\ZZ[t,t^{-1}]\). Consequently, \(\Th(p,q)\) is not equivariantly \(\QQ\)-slice with
respect to any strong inversion.
\end{theorem}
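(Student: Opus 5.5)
The plan is to reduce the statement to a purely algebraic claim about \(\Delta_{\Th(p,q)}(t)\), and then show that claim fails modulo \(2\). The second sentence of the theorem follows from the first by the equivariant Fox--Milnor theorem of Di Prisa--\c{S}avk~\cite{DPSequivariant}. That theorem says any equivariantly \(\QQ\)-slice strongly invertible knot has \(\Delta_K(t)\doteq f(t)^2\) for some \(f\in\ZZ[t,t^{-1}]\), and the condition does not depend on which strong inversion is chosen. So everything rests on showing \(\Delta_{\Th(p,2m)}\) is not a Laurent square.

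\textbf{Reduction mod 2.} Suppose \(\Delta(t)=\pm t^k f(t)^2\). Reducing mod \(2\) and using Frobenius gives
\[
\Delta(t)\equiv t^k f(t^2)\pmod 2 .
\]
Hence, after normalizing by a power of \(t\), every monomial of \(\Delta \bmod 2\) has exponent of a single parity. It therefore suffices to exhibit two monomials of \(\Delta_{\Th(p,2m)}(t)\bmod 2\) whose exponents have different parity. An equivalent test is to find an irreducible factor of \(\Delta\bmod 2\) occurring to odd multiplicity.

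\textbf{Seifert matrix mod 2.} Take the Bennequin (disk--band) surface of the closed braid \(\widehat{A_p^{q}}\): one disk per strand and one band per crossing. Its Seifert matrix \(V\) has entries in \(\{0,\pm1\}\), determined by which bands are adjacent in each column. Changing the sign of a crossing only changes signs and possibly swaps the roles of \(V\) and \(V^T\) in the local blocks. Modulo \(2\) the pattern therefore depends only on the underlying column structure, up to a transposition convention that I will track carefully.

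The goal is to show that \(V-tV^T \bmod 2\) coincides, after a unimodular change of basis, with the corresponding matrix for the positive braid \((\sigma_1\cdots\sigma_{p-1})^{q}\). This would give
\[
\Delta_{\Th(p,q)}(t)\equiv \Delta_{T(p,q)}(t)=\frac{(t^{pq}-1)(t-1)}{(t^p-1)(t^q-1)}\pmod 2 .
\]
The sanity check is \(\Th(3,2)=\) trefoil \(=T(3,2)\), where \(1+t+t^2\) is visibly not a square mod \(2\).

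\textbf{Back-up via Burau.} If the transposition bookkeeping fails, I will instead use the Burau formula
\[
(1+t+\dots+t^{p-1})\,\Delta=\det\bigl(I-\Bur(A_p^{2m})\bigr).
\]
The paper hints at a Garside conjugacy: \(A_p^2\) is conjugate to a braid whose reduced Burau image \(X\) satisfies \(\det(I-X^{2m})=\det(I-X^m)\det(I+X^m)\). This exhibits \(\Delta\) as a product of two factors related by a sign-twist, \(g(t)\) and \(g^\ast(t)\). A square would require each irreducible factor to occur to even total multiplicity across these two pieces. I would then use the mod-\(2\) computation from the previous step to locate one factor occurring exactly once.

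\textbf{Main obstacle.} The hard step is the last one: proving that the cyclotomic-type product above, for \(q=2^a m'\) with \(p\) odd, has some irreducible factor mod \(2\) of odd multiplicity. Doubling \(q\) squares many factors mod \(2\), since \(t^{2m}-1\equiv(t^m-1)^2\). My first attempt is to examine \(\Phi_{p\cdot 2^a}\) and \(\Phi_{p}\)-type contributions. For odd \(d\), the relation \(\Phi_{2^a d}\equiv\Phi_d^{2^{a-1}}\pmod 2\) lets me count multiplicities exactly. I would then check that the factor \(\Phi_{pd}\) for odd \(d\mid m'\), in particular \(d=1\), gives \(\Phi_p\)-type factors whose multiplicities sum to an odd number. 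If that count unexpectedly comes out even, I would fall back on a direct evaluation: compare \(\Delta(1)=\pm1\) with \(\Delta(-1)\) modulo \(4\), which detects non-squares via \(f(1)^2\) versus \(f(-1)^2\).
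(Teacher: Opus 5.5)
\textbf{There is a genuine gap.} Your reduction to a mod-\(2\) parity test and the final appeal to Di Prisa--\c{S}avk match the paper. The central step, however, is false: the congruence \(\Delta_{\Th(p,q)}\equiv\Delta_{T(p,q)}\pmod 2\) does not hold, so the cyclotomic multiplicity count in your last paragraph is done on the wrong polynomial.

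Your Seifert-matrix sketch never uses the parity of \(q\), and it already fails at \((p,q)=(3,5)\). Write \(\Phi_n\) for the \(n\)-th cyclotomic polynomial. The knot \(\Th(3,5)=10_{123}\) has \(\Delta=(1-3t+3t^2-3t^3+t^4)^2\equiv\Phi_5^2\pmod 2\). By contrast, \(\Delta_{T(3,5)}=\Phi_{15}\) is squarefree mod \(2\).

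The congruence also fails for even \(q\). In characteristic \(2\) one has \(I-N^2=(I-N)^2\). Applying this to \(N=M_3^5\) gives \(\Delta_{\Th(3,10)}\equiv\Phi_3\Delta_{\Th(3,5)}^2\equiv\Phi_3\Phi_5^4\). On the other hand, \(\Delta_{T(3,10)}=\Phi_6\Phi_{15}\Phi_{30}\equiv\Phi_3\Phi_{15}^2\pmod 2\), which is different.

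The faulty assertion is that sign changes are harmless mod \(2\). Over \(\mathbb F_2\), \(\det(V+tV^T)\) is unchanged if you transpose \(V\) globally. It can change if you transpose only some diagonal blocks while keeping the coupling blocks fixed; the negative columns are exactly what produce the \(-A_q^T\) diagonal blocks. The agreement you see at \(q=2\) is special to \(q\) being a power of \(2\). Also, \(\Th(3,2)\) is the figure-eight knot, not the trefoil.

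Your fallbacks do not repair this. The Burau back-up defers to the same mod-\(2\) comparison. The identity \(\det(I-X^{2m})=\det(I-X^m)\det(I+X^m)\) holds for every matrix \(X\), so it supplies no square structure by itself. No evaluation at \(t=\pm1\) can work in general, because \(\det\Th(7,2)=169=13^2\) and \(\det\Th(7,4)=533^2\) are both perfect squares.

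What is missing is an argument about the Turk's head braid itself. The paper does this in two steps.
\begin{enumerate}
\item It reduces to \(q=2\) by showing \(\det Q_m(M_p^2)\) is a square. The Garside conjugacy \(A_p\sim A_p^{-1}\), together with \(\det M_p=1\), makes the characteristic polynomial of \(M_p\) reciprocal.
\item It reads off \(\Delta_{\Th(p,2)}\equiv1+\cdots+t^{p-1}\) from the actual Seifert matrix, using \(V_{p,2}\equiv L_{p-1}\).
\end{enumerate}

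Your observation that doubling \(q\) squares things mod \(2\) would also give a complete and shorter proof, provided you apply it to \(M_p=\Bur_t(A_p)\) rather than to torus-knot factors. Write \(q=2^aq'\) with \(q'\) odd. Then \(I-M_p^{q}\equiv(I-M_p^{q'})^{2^a}\pmod 2\), so
\[
\Delta_{\Th(p,q)}\equiv(1+\cdots+t^{p-1})^{2^a-1}\,\Delta_{\Th(p,q')}^{2^a}\pmod 2 .
\]
For odd \(p\), \(1+\cdots+t^{p-1}\) is nonconstant and squarefree over \(\mathbb F_2\), and \(\Delta_{\Th(p,q')}(1)=\pm1\) is nonzero mod \(2\). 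Hence some irreducible factor occurs to odd multiplicity, and \(\Delta_{\Th(p,q)}\) is not a square mod \(2\).
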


\begin{corollary}\label{cor:even-not-klein}
Let \(p\geq3\) be odd and let \(q\) be even, with \(\gcd(p,q)=1\). Then
\(\Th(p,q)\) is not Klein amphichiral.
\end{corollary}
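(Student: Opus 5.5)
The statement immediately above is Corollary~\ref{cor:even-not-klein}. The plan is to deduce it directly from Theorem~\ref{thm:even-obstruction} together with the result of Di Prisa--\c{S}avk that Klein amphichirality implies equivariant \(\mathbb Q\)-sliceness. No new geometric input is needed; the argument is a contrapositive.

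\textbf{Step 1: set up a contradiction.} Suppose \(\Th(p,q)\) were Klein amphichiral, with \(p\ge 3\) odd, \(q\) even and \(\gcd(p,q)=1\). By definition there would be a strong inversion \(\rho\) and a strongly negative amphichiral involution \(\tau\) that commute.

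\textbf{Step 2: produce an equivariant \(\mathbb Q\)-slice disk.} Di Prisa--\c{S}avk~\cite{DPSequivariant} proved that every Klein amphichiral knot is equivariantly \(\mathbb Q\)-slice. Here I must check which strong inversion their construction uses: it is the strong inversion \(\rho\) that forms part of the commuting pair. This is exactly how the result is applied in the proof of Corollary~\ref{cor:equivariant-Q-slice}. So \((\Th(p,q),\rho)\) would be equivariantly \(\mathbb Q\)-slice in the sense of Definition~\ref{def:eq-Q-slice}.

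\textbf{Step 3: contradict Theorem~\ref{thm:even-obstruction}.} That theorem says \(\Th(p,q)\) is not equivariantly \(\mathbb Q\)-slice with respect to any strong inversion. In particular this holds for \(\rho\), which contradicts Step 2. Equivalently, I can route the contradiction through the Alexander polynomial. By the equivariant Fox--Milnor condition, \(\Delta_{\Th(p,q)}(t)\) would have to be a Laurent square up to a unit. The first assertion of Theorem~\ref{thm:even-obstruction} rules this out.

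\textbf{Main obstacle.} The only point needing care is Step 2: the hypotheses of the Di Prisa--\c{S}avk theorem must match the definition of Klein amphichirality used here. In particular, \(\tau\) should be allowed to have fixed set \(S^0\), and the conclusion should concern the inversion \(\rho\) from the commuting pair. Since Theorem~\ref{thm:even-obstruction} quantifies over all strong inversions, it does not matter which inversion the cited theorem produces, so this step poses no real difficulty. All of the substantive work lies in Theorem~\ref{thm:even-obstruction}, which is assumed here.
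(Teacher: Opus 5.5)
Your proposal is correct and is essentially the paper's own argument: assume Klein amphichirality, invoke the Di Prisa--\c{S}avk theorem that Klein amphichiral knots are equivariantly \(\mathbb Q\)-slice for the associated strong inversion, and contradict Theorem~\ref{thm:even-obstruction}. Your remark that the identity of the strong inversion does not matter, because Theorem~\ref{thm:even-obstruction} covers all strong inversions, is the right way to dispose of the only potential subtlety.
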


The standard braid word is useful for computing the Gauss code, but it is too
rigid as a spatial model for the full Klein symmetry. In a standard rectangular braid diagram, the local braid generators are supported on the
adjacent edges of the path graph
\[
        P_p:\quad 1-2-\cdots-p.
\]
Any symmetry of this rectangular braid model which sends standard braid
generators to standard braid generators induces an automorphism of \(P_p\). Since
\[
        \Aut(P_p)\cong \mathbb Z_2,
\]
the only nontrivial such operation is full reversal of the strand order. Thus
one should not expect the required commuting pair to be visible as a literal
symmetry of the standard rectangular braid diagram. This is why the construction of
Theorem~\ref{thm:odd-klein} is carried out on the signed, arrowed Gauss diagram
and then realized through the associated abstract link diagram. We use the
language of abstract link diagrams of Kamada--Kamada~\cite{KamadaKamada2000}.
In this framework, the Gauss data together with the cyclic order data determine
a disk-band surface, and capping the boundary components gives a closed
supporting surface. The related viewpoint of knots on surfaces and stable
equivalence is due to Carter--Kamada--Saito~\cite{CKS2002}. In the present
case, the Gauss diagram comes from the standard classical closed-braid diagram,
so the closed supporting surface is \(S^2\).

The odd-\(q\) construction then proceeds in three steps. First, we define two
commuting involutions \(R\) and \(T\) on the signed, arrowed Gauss diagram.
Second, we verify that they are compatible with the cyclic order data of the
associated abstract link diagram, so that they act on the supporting sphere.
Third, we thicken the supporting sphere to \(S^3\) and use normal reversal to
convert the exchange of over- and under-passing branches into ambient
involutions preserving the embedded knot.

The construction is carried out first in the piecewise-linear category.
Lemma~\ref{lem:equivariant-smoothing} then equips the resulting finite
\(V_4\)-action with a compatible smooth structure and smooths the invariant
knot equivariantly. Consequently, the final ambient involutions in
Theorem~\ref{thm:odd-klein} may be taken to be smooth.

The paper is organized as follows. Sections~\ref{sec:Gauss_code}--\ref{sec:fixed_sets}
prove Theorem~\ref{thm:odd-klein}: we compute the Gauss code, verify compatibility
with the cyclic order data of the associated abstract link diagram, realize the
induced \(V_4\)-action on the supporting sphere obtained by capping the
disk-band surface, thicken it to \(S^3\), and identify the fixed sets.
Sections~\ref{sec:laurent-prelim}--\ref{sec:even-failure} prove
Theorem~\ref{thm:even-obstruction}: first by establishing a Burau square
quotient, and then by proving the base case \(q=2\) is not a Laurent square
modulo \(2\). Section~\ref{sec:examples} records determinant checks, and
Section~\ref{sec:conclusion} summarizes the dichotomy.

\section{The Gauss code}\label{sec:Gauss_code}

Let
\[
        A_p=
        \sigma_1\sigma_2^{-1}\sigma_3\sigma_4^{-1}
        \cdots
        \sigma_{p-2}\sigma_{p-1}^{-1}.
\]
Since \(p\) is odd, \(A_p\) has \(p-1\) letters, an even number, and their signs
alternate.

Let
\[
        r=0,1,\ldots,p-2
\]
index the letters of \(A_p\), so that the \(r\)-th letter is
\[
        \sigma_{r+1}^{(-1)^r}.
\]
For \(j\in\mathbb Z/q\mathbb Z\), let \(c_{j,r}\) denote the crossing
corresponding to the \(r\)-th letter in the \(j\)-th copy of \(A_p\). Its sign
is
\[
        \varepsilon(c_{j,r})=(-1)^r.
\]

For each \(c_{j,r}\), let
\[
        u_{j,r},\qquad v_{j,r}
\]
be its two preimages on the oriented Gauss circle, where \(u_{j,r}\) is the
branch entering from the lower-index braid position and \(v_{j,r}\) is the
branch entering from the higher-index braid position. 

Each chord is oriented from its over-passing endpoint to its
under-passing endpoint. Together with the crossing sign, this gives
the signed, arrowed Gauss diagram \(\mathcal G_{p,q}\). Set
\[
        E_{p,q}=
        \{u_{j,r},v_{j,r}\mid j\in\mathbb Z/q\mathbb Z,\ 0\leq r\leq p-2\}.
\]
Thus \(|E_{p,q}|=2(p-1)q\).

Let
\[
        S:E_{p,q}\longrightarrow E_{p,q}
\]
be the successor map along the oriented knot.

\begin{lemma}\label{lem:successor}
The successor map is given by
\[
        S(u_{j,r})=u_{j,r+1}
        \qquad (0\leq r\leq p-3),
\]
\[
        S(u_{j,p-2})=v_{j+1,p-2},
\]
\[
        S(v_{j,0})=u_{j+1,0},
\]
and
\[
        S(v_{j,r})=v_{j+1,r-1}
        \qquad (1\leq r\leq p-2),
\]
where all \(j\)-indices are taken modulo \(q\).
\end{lemma}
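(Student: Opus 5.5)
The plan is to read off the successor map directly from the braid word by tracking braid positions. I will use the convention that the braid \(A_p^q\) is read left to right, one copy of \(A_p\) after another, with copy \(j\) followed by copy \(j+1\) (indices mod \(q\), by closure). The \(r\)-th letter \(\sigma_{r+1}^{\pm1}\) of each copy exchanges the strands in positions \(r+1\) and \(r+2\), whatever its sign. By definition, \(u_{j,r}\) is the branch of \(c_{j,r}\) arriving in position \(r+1\) and \(v_{j,r}\) the branch arriving in position \(r+2\). Since signs do not affect which positions are permuted, the successor map depends only on the underlying permutation and on the order of the letters.

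The key observation is a bookkeeping of exit positions. The branch \(u_{j,r}\) leaves \(c_{j,r}\) in position \(r+2\). The branch \(v_{j,r}\) leaves in position \(r+1\). The successor of a branch is the first crossing, strictly later along the braid, that involves its exit position, together with the side on which it enters. Within a single copy of \(A_p\), position \(k\) is touched only by letters \(r=k-2\) (where it is the higher position) and \(r=k-1\) (where it is the lower position), and these occur in that order.

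I will then split into the four cases of the statement.

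(a) For \(u_{j,r}\) with \(r\leq p-3\), the exit position \(r+2\) is next touched in the same copy by letter \(r+1\), as its lower position. This gives \(S(u_{j,r})=u_{j,r+1}\).

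(b) For \(u_{j,p-2}\), the exit position is \(p\). No later letter of copy \(j\) touches it. In copy \(j+1\) the only letter touching \(p\) is \(r=p-2\), as its higher position. This gives \(v_{j+1,p-2}\).

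(c) For \(v_{j,r}\), the exit position \(r+1\) could only be touched again in copy \(j\) by letters \(r-1\) or \(r\). Both of these have already occurred, so the next crossing lies in copy \(j+1\).

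(d) In copy \(j+1\), position \(r+1\) is first touched by letter \(r-1\) as its higher position when \(r\geq1\), giving \(v_{j+1,r-1}\). When \(r=0\), position \(1\) is first touched by letter \(0\) as its lower position, giving \(u_{j+1,0}\).

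The only point requiring care, which I regard as the main (if mild) obstacle, is fixing the conventions consistently. These are the reading direction of the braid word and the passage from the bottom of copy \(j\) to the top of copy \(j+1\), including wrap-around from copy \(q-1\) to copy \(0\) through the closure. Once these are fixed, every case reduces to the "first later letter touching this position" rule. As a consistency check, I will confirm that \(S\) is a bijection on \(E_{p,q}\): its images are precisely all \(u\)'s and \(v\)'s, each hit exactly once. The hypothesis \(\gcd(p,q)=1\) is not needed for the formula itself; it only guarantees that \(S\) is a single cycle, i.e. that the closure is a knot.
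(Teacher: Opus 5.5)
Your proposal is correct and follows the same route as the paper: both track the braid position in which each branch exits its crossing and locate the next crossing touching that position, using the same four-case split. Your explicit bookkeeping rule (within a block, position \(k\) is touched only by letters \(k-2\) and \(k-1\), in that order) makes the paper's brief case analysis slightly more systematic, but the argument is essentially the same.
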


\begin{proof}
The \(r\)-th letter of \(A_p\) involves the adjacent braid positions
\(r+1\) and \(r+2\). If a strand enters \(c_{j,r}\) from the lower-index
position, then after the crossing it moves to the higher-index position. For
\(0\leq r\leq p-3\), the next crossing it meets is \(c_{j,r+1}\), again from
the lower-index side. Hence
\[
        S(u_{j,r})=u_{j,r+1}.
\]
For \(r=p-2\), the strand exits the block in position \(p\). In the next copy
of \(A_p\), the next crossing it meets is again the last crossing, from the
higher-index side. Therefore
\[
        S(u_{j,p-2})=v_{j+1,p-2}.
\]

If a strand enters \(c_{j,0}\) from the higher-index position, then after the
crossing it moves to position \(1\). The next crossing it meets is the first
crossing in the next block, from the lower-index side. Hence
\[
        S(v_{j,0})=u_{j+1,0}.
\]
For \(1\leq r\leq p-2\), entering \(c_{j,r}\) from the higher-index side sends
the strand to position \(r+1\). The next crossing it meets occurs in the next
block at generator \(r-1\), from the higher-index side. Thus
\[
        S(v_{j,r})=v_{j+1,r-1}.
\]
\end{proof}

\begin{remark}\label{rem:one-component}
Passing through one block \(A_p\) sends a braid position \(k\) to \(k-1\)
modulo \(p\). Passing through \(q\) blocks sends \(k\) to \(k-q\) modulo \(p\).
Since \(\gcd(p,q)=1\), the closure is a knot; equivalently, \(S\) has one
orbit on \(E_{p,q}\), of length \(2(p-1)q\).
\end{remark}

Since \(q\) is odd, \(2\) is invertible modulo \(q\). Define
\[
        \lambda\equiv (p-2)2^{-1}\pmod q,
\]
so that
\[
        2\lambda\equiv p-2\pmod q.
\]

Define maps
\[
        T,R:E_{p,q}\longrightarrow E_{p,q}
\]
by
\[
        T(u_{j,r})=u_{-j,p-2-r},\qquad
        T(v_{j,r})=v_{-j,p-2-r},
\]
and
\[
        R(u_{j,r})=v_{\lambda-j-r,r},\qquad
        R(v_{j,r})=u_{\lambda-j-r,r}.
\]

\begin{lemma}\label{lem:reverse-circle}
The maps \(T\) and \(R\) reverse the orientation of the Gauss circle:
\[
        TS=S^{-1}T,\qquad RS=S^{-1}R.
\]
\end{lemma}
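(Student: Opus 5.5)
The plan is a direct verification on generators. By Lemma~\ref{lem:successor}, \(S\) is an explicit bijection of \(E_{p,q}\), so it suffices to check
\[
TS(x)=S^{-1}T(x)\quad\text{and}\quad RS(x)=S^{-1}R(x)
\]
for every \(x\in E_{p,q}\). I split into the four cases of Lemma~\ref{lem:successor}: \(u_{j,r}\) with \(r\le p-3\), \(u_{j,p-2}\), \(v_{j,0}\), and \(v_{j,r}\) with \(r\ge 1\).

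First I invert Lemma~\ref{lem:successor}, which is just bookkeeping:
\[
S^{-1}(u_{j,r})=u_{j,r-1}\ (r\ge1),\qquad S^{-1}(u_{j,0})=v_{j-1,0},
\]
\[
S^{-1}(v_{j,p-2})=u_{j-1,p-2},\qquad S^{-1}(v_{j,r})=v_{j-1,r+1}\ (r\le p-3).
\]

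For \(T\), the map \(r\mapsto p-2-r\) exchanges the boundary indices \(0\) and \(p-2\). Likewise, the map \(j\mapsto -j\) turns \(j+1\) into \(-j-1\). So each case of \(S\) lands in the mirror case of \(S^{-1}\). For instance:
\begin{itemize}
\item \(TS(u_{j,r})=u_{-j,p-3-r}=S^{-1}(u_{-j,p-2-r})\) for \(r\le p-3\);
\item \(TS(u_{j,p-2})=T(v_{j+1,p-2})=v_{-j-1,0}=S^{-1}(u_{-j,0})\).
\end{itemize}
The two \(v\)-cases are symmetric.

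For \(R\), the map swaps \(u\) and \(v\) while keeping \(r\) fixed. Here the \(j\)-shift \(\lambda-j-r\) is exactly what absorbs the drift in \(j\). For instance:
\begin{itemize}
\item \(RS(u_{j,r})=v_{\lambda-j-r-1,r+1}=S^{-1}(v_{\lambda-j-r,r})\) for \(r\le p-3\);
\item \(RS(u_{j,p-2})=u_{\lambda-j-p+1,p-2}=S^{-1}(v_{\lambda-j-p+2,p-2})\);
\item \(RS(v_{j,0})=v_{\lambda-j-1,0}=S^{-1}(u_{\lambda-j,0})\);
\item \(RS(v_{j,r})=u_{\lambda-j-r,r-1}=S^{-1}(u_{\lambda-j-r,r})\) for \(r\ge1\).
\end{itemize}
Throughout, all \(j\)-indices are read in \(\mathbb Z/q\mathbb Z\). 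The specific value of \(\lambda\) plays no role in this lemma; it presumably matters only later, for compatibility with signs or arrows.

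Finally, I note that \(T\) and \(R\) are involutions, hence bijections. For \(R\) this holds because \(\lambda-(\lambda-j-r)-r=j\). Together with \(S\) having a single orbit (Remark~\ref{rem:one-component}), the relation \(XS=S^{-1}X\) says precisely that \(X\) reverses the cyclic order of the Gauss circle. The main obstacle is only the boundary cases \(r=0\) and \(r=p-2\), where the index shifts in \(j\) must be tracked carefully modulo \(q\). I would present these explicitly and leave the interior cases as a one-line check.
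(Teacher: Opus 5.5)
Your proposal is correct and matches the paper's proof: both check \(TS=S^{-1}T\) and \(RS=S^{-1}R\) directly on the four endpoint cases of Lemma~\ref{lem:successor}, and your inverted successor formulas and case computations are right. You are also right that the value of \(\lambda\) plays no role here; it is used later, for the commutativity \(RT=TR\) in Lemma~\ref{lem:commuting}, not for signs or arrows.
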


\begin{proof}
For \(T\), the four cases are as follows. If \(0\leq r\leq p-3\), then
\[
        TS(u_{j,r})
        =
        T(u_{j,r+1})
        =
        u_{-j,p-3-r}
        =
        S^{-1}(u_{-j,p-2-r})
        =
        S^{-1}T(u_{j,r}).
\]
For \(r=p-2\),
\[
        TS(u_{j,p-2})
        =
        T(v_{j+1,p-2})
        =
        v_{-j-1,0}
        =
        S^{-1}(u_{-j,0})
        =
        S^{-1}T(u_{j,p-2}).
\]
For \(r=0\),
\[
        TS(v_{j,0})
        =
        T(u_{j+1,0})
        =
        u_{-j-1,p-2}
        =
        S^{-1}(v_{-j,p-2})
        =
        S^{-1}T(v_{j,0}).
\]
If \(1\leq r\leq p-2\), then
\[
        TS(v_{j,r})
        =
        T(v_{j+1,r-1})
        =
        v_{-j-1,p-1-r}
        =
        S^{-1}(v_{-j,p-2-r})
        =
        S^{-1}T(v_{j,r}).
\]
Thus \(TS=S^{-1}T\).

For \(R\), if \(0\leq r\leq p-3\), then
\[
        RS(u_{j,r})
        =
        R(u_{j,r+1})
        =
        v_{\lambda-j-r-1,r+1}
        =
        S^{-1}(v_{\lambda-j-r,r})
        =
        S^{-1}R(u_{j,r}).
\]
For \(r=p-2\),
\[
        RS(u_{j,p-2})
        =
        R(v_{j+1,p-2})
        =
        u_{\lambda-j-(p-1),p-2}
        =
        S^{-1}(v_{\lambda-j-(p-2),p-2})
        =
        S^{-1}R(u_{j,p-2}).
\]
For \(r=0\),
\[
        RS(v_{j,0})
        =
        R(u_{j+1,0})
        =
        v_{\lambda-j-1,0}
        =
        S^{-1}(u_{\lambda-j,0})
        =
        S^{-1}R(v_{j,0}).
\]
If \(1\leq r\leq p-2\), then
\[
        RS(v_{j,r})
        =
        R(v_{j+1,r-1})
        =
        u_{\lambda-j-r,r-1}
        =
        S^{-1}(u_{\lambda-j-r,r})
        =
        S^{-1}R(v_{j,r}).
\]
Hence \(RS=S^{-1}R\).
\end{proof}

\begin{lemma}\label{lem:commuting}
The maps \(R\) and \(T\) are commuting involutions.
\end{lemma}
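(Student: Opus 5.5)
This is a direct computation on the finite set \(E_{p,q}\), using only the formulas for \(T\) and \(R\). All \(j\)-indices are read modulo \(q\), and the index \(r\in\{0,\dots,p-2\}\) is never reduced. Each of the four identities \(T^2=\mathrm{id}\), \(R^2=\mathrm{id}\), \(RT=TR\), and \(T,R\neq\mathrm{id}\) can be checked on generators \(u_{j,r}\) and \(v_{j,r}\) separately. The formulas are symmetric under swapping \(u\leftrightarrow v\): \(T\) preserves letter type and \(R\) swaps it. So it suffices to check each identity on \(u_{j,r}\), with the \(v\) case identical.

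First, \(T^2(u_{j,r})=T(u_{-j,p-2-r})=u_{j,p-2-(p-2-r)}=u_{j,r}\). Second,
\[
R^2(u_{j,r})=R(v_{\lambda-j-r,r})=u_{\lambda-(\lambda-j-r)-r,r}=u_{j,r},
\]
since the second index \(r\) is unchanged by \(R\). Both are immediate.

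For commutativity, compute both composites on \(u_{j,r}\):
\[
TR(u_{j,r})=T(v_{\lambda-j-r,r})=v_{-\lambda+j+r,\,p-2-r},
\]
\[
RT(u_{j,r})=R(u_{-j,p-2-r})=v_{\lambda+j-(p-2-r),\,p-2-r}=v_{\lambda+j+r-(p-2),\,p-2-r}.
\]
These agree exactly when \(-\lambda\equiv\lambda-(p-2)\pmod q\), that is, \(2\lambda\equiv p-2\pmod q\). This is precisely the defining property of \(\lambda\), which is available because \(q\) is odd.

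The check on \(v_{j,r}\) is the same with \(u\) and \(v\) interchanged. The defining congruence of \(\lambda\) is the only real input; everything else is bookkeeping. Its role is to make the \(j\)-shift of \(R\) symmetric under the reflection \(j\mapsto -j\), \(r\mapsto p-2-r\) implemented by \(T\).

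Finally, to make "involution" and \(\langle R,T\rangle\cong\ZZ_2\oplus\ZZ_2\) meaningful, note that none of \(R\), \(T\), \(RT\) is the identity:

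- \(R\) swaps the \(u\)- and \(v\)-letters, so \(R\neq\mathrm{id}\).
- \(RT\) also swaps the \(u\)- and \(v\)-letters, so \(RT\neq\mathrm{id}\).
- \(T\) preserves letter type, but it sends \(u_{0,0}\) to \(u_{0,p-2}\), and \(p-2\neq 0\) since \(p\geq3\). So \(T\neq\mathrm{id}\).

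There is no real obstacle. The only point needing care is keeping the \(j\)-arithmetic modulo \(q\) while leaving \(r\) unreduced, so that no boundary cases arise: unlike in Lemma~\ref{lem:reverse-circle}, \(R\) and \(T\) never move \(r\) outside \(\{0,\dots,p-2\}\).
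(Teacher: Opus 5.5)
Your proof is correct and follows essentially the same route as the paper's: verify \(T^2=R^2=1\) on \(u_{j,r}\), then compare \(RT(u_{j,r})\) and \(TR(u_{j,r})\), which agree exactly because \(2\lambda\equiv p-2\pmod q\), with the \(v\)-case identical. Your extra check that \(R\), \(T\), and \(RT\) are non-identity is not part of the paper's proof of this lemma; it is harmless and anticipates the later claim that \(\langle\rho,\tau\rangle\cong\mathbb Z_2\oplus\mathbb Z_2\).
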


\begin{proof}
The identities \(T^2=1\) and \(R^2=1\) follow immediately:
\[
        T^2(u_{j,r})=u_{j,r},\qquad T^2(v_{j,r})=v_{j,r},
\]
and
\[
        R^2(u_{j,r})
        =
        R(v_{\lambda-j-r,r})
        =
        u_{\lambda-(\lambda-j-r)-r,r}
        =
        u_{j,r},
\]
with the same calculation for \(v_{j,r}\).

It remains to prove commutativity. On \(u_{j,r}\),
\[
        RT(u_{j,r})
        =
        R(u_{-j,p-2-r})
        =
        v_{\lambda+j-(p-2-r),p-2-r}
        =
        v_{\lambda+j+r-(p-2),p-2-r},
\]
whereas
\[
        TR(u_{j,r})
        =
        T(v_{\lambda-j-r,r})
        =
        v_{-(\lambda-j-r),p-2-r}
        =
        v_{j+r-\lambda,p-2-r}.
\]
The two \(j\)-coordinates agree modulo \(q\) because
\[
        2\lambda\equiv p-2\pmod q.
\]
Thus \(RT(u_{j,r})=TR(u_{j,r})\). The calculation on \(v_{j,r}\) is identical.
Therefore \(RT=TR\).
\end{proof}

By Lemma~\ref{lem:reverse-circle}, each of \(R\) and \(T\) sends every
complementary arc of the marked Gauss circle to a complementary arc, reversing
its orientation. We extend \(R\) and \(T\) linearly over these complementary
arcs and continue to denote the resulting PL involutions of the Gauss circle
by the same letters. By Lemma~\ref{lem:commuting}, these extensions commute.

\begin{lemma}\label{lem:crossings}
The maps \(T\) and \(R\) preserve chord pairings. More precisely,
\[
        T(c_{j,r})=c_{-j,p-2-r},
        \qquad
        R(c_{j,r})=c_{\lambda-j-r,r}.
\]
Furthermore, \(T\) reverses crossing signs and \(R\) preserves crossing signs.
\end{lemma}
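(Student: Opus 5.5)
The plan is a direct check from the definitions. The chord of \(c_{j,r}\) joins \(u_{j,r}\) and \(v_{j,r}\). Applying the formulas for \(T\) and \(R\) gives
\[
        T\{u_{j,r},v_{j,r}\}=\{u_{-j,p-2-r},v_{-j,p-2-r}\},
        \qquad
        R\{u_{j,r},v_{j,r}\}=\{v_{\lambda-j-r,r},u_{\lambda-j-r,r}\}.
\]
Each right-hand side is again the endpoint pair of a single chord, namely \(c_{-j,p-2-r}\) and \(c_{\lambda-j-r,r}\) respectively. This gives the stated formulas and shows that chord pairings are preserved.

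For the signs, I would use \(\varepsilon(c_{j,r})=(-1)^r\), which depends only on \(r\).

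\emph{The map \(T\).} It sends \(r\) to \(p-2-r\). Since \(p\) is odd, \(p-2\) is odd, so
\[
        (-1)^{p-2-r}=-(-1)^r .
\]
Hence \(T\) reverses crossing signs.

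\emph{The map \(R\).} It fixes \(r\), so it preserves crossing signs.

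The sign claim above is purely combinatorial. The meaningful check, which I expect to be the only real point, is that it matches the geometry. A Gauss-circle involution reversing orientation, applied to the chord \(c\), changes the local sign according to whether it also reverses the arrow (over/under) and the ambient orientation. So I would also record what happens to the over/under labels. For \(c_{j,r}\), the braid letter \(\sigma_{r+1}^{(-1)^r}\) determines which of \(u_{j,r},v_{j,r}\) is the over-branch: for \(r\) even the lower-index branch \(u\) passes over, and for \(r\) odd it passes under, up to the paper's convention.

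\emph{Arrows under \(T\).} \(T\) maps \(u\mapsto u\) and flips the parity of \(r\). Hence it sends over-endpoints to under-endpoints and reverses chord arrows.

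\emph{Arrows under \(R\).} \(R\) exchanges \(u\) and \(v\) and keeps \(r\). Hence it also sends over-endpoints to under-endpoints and reverses arrows.

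These arrow statements are consistent with the later realization. Both involutions reverse the circle orientation and exchange over/under branches via normal reversal. \(T\) is to be orientation-reversing on \(S^3\), which is compatible with sign reversal. \(R\) is to be orientation-preserving, which is compatible with sign preservation.
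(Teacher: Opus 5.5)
Your proposal is correct and is essentially the paper's proof: the endpoint formulas carry \(\{u_{j,r},v_{j,r}\}\) to the endpoint pair of \(c_{-j,p-2-r}\) (respectively \(c_{\lambda-j-r,r}\)), and the sign claims follow from \(\varepsilon(c_{j,r})=(-1)^r\) together with the fact that \(p-2\) is odd. Your extra discussion of over/under exchange and ambient orientation is not needed for this lemma; it is the content of Lemma~\ref{lem:over-under} and the later thickening step, and what you say there is consistent with the paper.
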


\begin{proof}
The chord corresponding to \(c_{j,r}\) has endpoints \(u_{j,r}\) and
\(v_{j,r}\). The endpoint formulas give the stated action on crossings.

Since \(\varepsilon(c_{j,r})=(-1)^r\) and \(p-2\) is odd,
\[
        \varepsilon(T(c_{j,r}))
        =
        (-1)^{p-2-r}
        =
        -(-1)^r
        =
        -\varepsilon(c_{j,r}).
\]
On the other hand,
\[
        \varepsilon(R(c_{j,r}))
        =
        (-1)^r
        =
        \varepsilon(c_{j,r}).
\]
\end{proof}

\begin{lemma}\label{lem:over-under}
Both \(R\) and \(T\) exchange over-passing and under-passing preimages at every
crossing.
\end{lemma}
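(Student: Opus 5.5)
The plan is to make explicit which endpoint of each chord is the over-passing one, and then read off the action of \(T\) and \(R\) from their defining formulas on \(E_{p,q}\).

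\textbf{Step 1: locate the over-passing endpoint.} Fix the standard convention for the braid generators. In a positive letter \(\sigma_i\), the strand entering from the lower-index position \(i\) crosses over the strand entering from position \(i+1\). In \(\sigma_i^{-1}\) it is the other way round: the strand from the higher-index position crosses over. (If the opposite convention is used, every statement below changes uniformly and the argument is identical.) The \(r\)-th letter of \(A_p\) is \(\sigma_{r+1}^{(-1)^r}\), so at \(c_{j,r}\) the over-passing preimage is
\[
        \mathrm{over}(c_{j,r})=
        \begin{cases}
        u_{j,r}, & r \text{ even},\\
        v_{j,r}, & r \text{ odd},
        \end{cases}
\]
and the under-passing preimage is the other endpoint. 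This is the only point that needs care: the sign of the letter must be tied correctly to the lower/higher-index labelling of \(u_{j,r},v_{j,r}\) fixed before Lemma~\ref{lem:successor}. It amounts to reading off the local picture of \(\sigma_i^{\pm1}\).

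\textbf{Step 2: the map \(T\).} By definition \(T\) sends \(u_{j,r}\) to \(u_{-j,p-2-r}\) and \(v_{j,r}\) to \(v_{-j,p-2-r}\). So it preserves the letter type \(u\) or \(v\) and replaces \(r\) by \(p-2-r\). Since \(p\) is odd, \(p-2\) is odd, and therefore \(p-2-r\) has the opposite parity to \(r\). By Step 1, a \(u\)-endpoint is over exactly when the index is even. Hence an over endpoint \(u_{j,r}\) with \(r\) even is sent to \(u_{-j,p-2-r}\) with odd index, which is the under endpoint of \(c_{-j,p-2-r}\). The same reasoning applies to the other three cases (\(u\) with \(r\) odd, \(v\) with \(r\) even, \(v\) with \(r\) odd).

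\textbf{Step 3: the map \(R\).} By definition \(R\) interchanges \(u\) and \(v\) while keeping the index \(r\). By Step 1, over versus under at a fixed index \(r\) is decided precisely by the letter type \(u\) or \(v\). So \(R\) sends the over endpoint of \(c_{j,r}\) to the under endpoint of \(R(c_{j,r})=c_{\lambda-j-r,r}\), and vice versa.

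\textbf{Consequences and consistency check.} Since both maps send chords to chords by Lemma~\ref{lem:crossings}, Steps 2 and 3 show that both \(R\) and \(T\) reverse every chord arrow. This is consistent with the claims that \(T\) flips crossing signs and \(R\) preserves them. Both maps reverse the circle orientation (Lemma~\ref{lem:reverse-circle}) and reverse the arrows. Combined with the orientation character of the ambient map (orientation-preserving for \(\rho\), orientation-reversing for \(\tau\)), this accounts for the sign behaviour recorded in Lemma~\ref{lem:crossings}.
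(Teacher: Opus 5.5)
Your proof is correct and follows the paper's argument essentially step for step: you identify the over-passing endpoint of \(c_{j,r}\) as \(u_{j,r}\) for even \(r\) and \(v_{j,r}\) for odd \(r\), then note that \(T\) keeps the \(u/v\) label while flipping the parity of \(r\) (because \(p-2\) is odd), and that \(R\) keeps \(r\) while swapping \(u\) and \(v\). Your remark that the conclusion does not depend on which over/under convention is used for \(\sigma_i\) is a small but welcome addition to the paper's appeal to ``the standard braid convention.''
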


\begin{proof}
With the standard braid convention, at \(c_{j,r}\) the \(u\)-branch is
over-passing when \(r\) is even, and the \(v\)-branch is over-passing when
\(r\) is odd.

The map \(T\) sends \(r\) to \(p-2-r\). Since \(p-2\) is odd, \(r\) and
\(p-2-r\) have opposite parity. Since \(T\) preserves the \(u/v\)-label, it
exchanges over-passing and under-passing preimages.

The map \(R\) preserves \(r\) but exchanges \(u\) and \(v\). Hence \(R\) also
exchanges over-passing and under-passing preimages.
\end{proof}
 
\section{The rotation data}\label{sec:rotation_system}

For a crossing \(c=c_{j,r}\), write the incident half-edges as
\[
        u_c^-,\quad u_c^+,\quad v_c^-,\quad v_c^+,
\]
where \(u_c^-\) is the half-edge immediately before \(u_{j,r}\) on the
oriented Gauss circle, \(u_c^+\) is the half-edge immediately after
\(u_{j,r}\), and similarly for \(v_c^\pm\).

For the underlying immersed \(4\)-valent graph of the standard braid diagram,
use the cyclic order
\[
        \Omega_c=(u_c^-,\,v_c^+,\,u_c^+,\,v_c^-).
\]
The crossing sign and arrow data are recorded separately by the over/under
assignment and by the orientation of the Gauss circle.

\begin{lemma}\label{lem:rotation}
The map \(T\) preserves the cyclic order data of the associated abstract link
diagram, while \(R\) reverses it.
\end{lemma}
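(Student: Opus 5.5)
We need to compute how \(T\) and \(R\) act on half-edges and compare with \(\Omega_c\). Since both maps reverse the Gauss circle (Lemma~\ref{lem:reverse-circle}), they interchange "before" and "after". If a map \(F\) sends the preimage \(x\) to \(F(x)\), then \(F\) sends the half-edge \(x^-\) to \(F(x)^+\) and \(x^+\) to \(F(x)^-\). This holds because the arc immediately before \(x\) is mapped, orientation-reversed, onto the arc immediately after \(F(x)\).

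\emph{Step 1: the map \(T\).} Fix \(c=c_{j,r}\) and set \(c'=T(c)=c_{-j,p-2-r}\) (Lemma~\ref{lem:crossings}). Since \(T\) preserves the \(u/v\)-labels, the half-edges are sent by
\[
        u_c^\mp\mapsto u_{c'}^\pm,\qquad v_c^\mp\mapsto v_{c'}^\pm .
\]
Applying this to \(\Omega_c=(u_c^-,v_c^+,u_c^+,v_c^-)\) gives \((u_{c'}^+,v_{c'}^-,u_{c'}^-,v_{c'}^+)\). A cyclic shift by two positions turns this into \((u_{c'}^-,v_{c'}^+,u_{c'}^+,v_{c'}^-)=\Omega_{c'}\). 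So \(T\) carries \(\Omega_c\) to \(\Omega_{T(c)}\), and preserves the cyclic order.

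\emph{Step 2: the map \(R\).} Set \(c'=R(c)=c_{\lambda-j-r,r}\). Now \(R\) swaps the \(u\) and \(v\) labels, so
\[
        u_c^\mp\mapsto v_{c'}^\pm,\qquad v_c^\mp\mapsto u_{c'}^\pm .
\]
The image of \(\Omega_c\) is therefore \((v_{c'}^+,u_{c'}^-,v_{c'}^-,u_{c'}^+)\). A cyclic shift by one gives \((u_{c'}^-,v_{c'}^-,u_{c'}^+,v_{c'}^+)\), which is the reverse cyclic order of \(\Omega_{c'}\). Indeed, reading \(\Omega_{c'}\) backwards from \(u_{c'}^-\) gives \(u_{c'}^-,v_{c'}^-,u_{c'}^+,v_{c'}^+\). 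So \(R\) carries \(\Omega_c\) to the reverse of \(\Omega_{R(c)}\) at every crossing.

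\emph{Main obstacle.} The only delicate point is bookkeeping: we must confirm that the half-edges \(x^\pm\) are well defined and that the rule \(x^-\mapsto F(x)^+\) is valid at every marked point. This includes the block-boundary cases of Lemma~\ref{lem:successor}, where the successor jumps between blocks. Lemma~\ref{lem:reverse-circle} gives exactly the identity \(F\circ S=S^{-1}\circ F\) in all four cases, and this is what is needed.

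The result should also be interpreted consistently with the separately recorded over/under data. By Lemma~\ref{lem:over-under}, both maps exchange over and under, and the sign behaviour is given by Lemma~\ref{lem:crossings}. For \(T\), preserving orientation while flipping the sign is consistent with composing with the normal reversal. For \(R\), preserving the sign while reversing the rotation is consistent with an orientation reversal of the surface combined with the over/under swap. This interpretation is not needed to prove the lemma, which is purely the combinatorial statement of Steps 1 and 2.
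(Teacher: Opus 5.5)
Your proof is correct and follows essentially the same route as the paper. Both derive \(F(e^{\mp})=F(e)^{\pm}\) from \(FS=S^{-1}F\), then check that \(T(\Omega_c)\) is a cyclic rotation of \(\Omega_{T(c)}\) and that \(R(\Omega_c)\) is a cyclic rotation of the reverse of \(\Omega_{R(c)}\), with the same explicit shifts you give.
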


\begin{proof}
By Lemma~\ref{lem:reverse-circle}, both \(T\) and \(R\) reverse the orientation
of the Gauss circle. Hence, for \(F\in\{T,R\}\) and every Gauss endpoint \(e\),
\[
        F(e^-)=F(e)^+,\qquad F(e^+)=F(e)^-.
\]

Let \(c'=T(c)\). Since \(T\) preserves the \(u/v\)-label,
\[
\begin{aligned}
        T(\Omega_c)
        &=
        T(u_c^-,\,v_c^+,\,u_c^+,\,v_c^-)\\
        &=
        (u_{c'}^+,\,v_{c'}^-,\,u_{c'}^-,\,v_{c'}^+).
\end{aligned}
\]
This is a cyclic rotation of
\[
        \Omega_{c'}
        =
        (u_{c'}^-,\,v_{c'}^+,\,u_{c'}^+,\,v_{c'}^-).
\]
Thus \(T\) preserves the rotation data.

Let \(c''=R(c)\). Since \(R\) exchanges the \(u/v\)-label,
\[
\begin{aligned}
        R(\Omega_c)
        &=
        R(u_c^-,\,v_c^+,\,u_c^+,\,v_c^-)\\
        &=
        (v_{c''}^+,\,u_{c''}^-,\,v_{c''}^-,\,u_{c''}^+).
\end{aligned}
\]
The reverse cyclic order of
\[
        \Omega_{c''}
        =
        (u_{c''}^-,\,v_{c''}^+,\,u_{c''}^+,\,v_{c''}^-)
\]
is
\[
        (u_{c''}^-,\,v_{c''}^-,\,u_{c''}^+,\,v_{c''}^+),
\]
and \(R(\Omega_c)\) is a cyclic rotation of this reverse order. Therefore
\(R\) reverses the rotation data.
\end{proof}
  
\section{Supporting-surface realization}\label{sec:Realization}

Let \(\mathcal G_{p,q}\) be the signed, arrowed Gauss diagram described above.
Following Kamada--Kamada~\cite{KamadaKamada2000}, we regard these data as
defining an abstract link diagram. Namely, identifying the two endpoints of
each chord gives an abstract \(4\)-valent graph \(\Gamma_{p,q}\), and the
cyclic orders \(\Omega_c\) give the rotation data at the vertices.
Equivalently, these data prescribe how small neighbourhoods of the vertices are
attached to the neighbouring edge-bands.

This disk-band and capping construction is the supporting-surface realization
associated with the abstract link diagram; see also
Carter--Kamada--Saito~\cite{CKS2002} for the related stable-equivalence
framework for knots on surfaces and virtual knots.

Thus the Gauss data together with the verified rotation data determine a
disk-band surface, which we denote by
\[
        N(\Gamma_{p,q}).
\]
Concretely, \(N(\Gamma_{p,q})\) is obtained by replacing each crossing vertex
of \(\Gamma_{p,q}\) by a small disk and each edge of \(\Gamma_{p,q}\) by a
rectangular band attached to the corresponding boundary intervals of the
crossing disks.  The small crossing disks are part of the disk-band surface
itself; they should not be confused with the capping disks introduced below.
The surface \(N(\Gamma_{p,q})\) deformation retracts onto \(\Gamma_{p,q}\), and
its boundary is a disjoint union of circles
\[
        \partial N(\Gamma_{p,q})=C_1\sqcup\cdots\sqcup C_m.
\]
Each \(C_i\) is a boundary component of the entire disk-band surface, not the
boundary of a single band. 
The closed supporting surface \(\Sigma(\mathcal G_{p,q})\) is obtained by
capping these boundary components. That is, for each boundary circle \(C_i\)
one attaches a new disk \(D_{C_i}\) along its boundary:
\[
        \partial D_{C_i}=C_i.
\]
Therefore
\[
        \Sigma(\mathcal G_{p,q})
        =
        N(\Gamma_{p,q})
        \cup_{C_1}D_{C_1}
        \cup\cdots\cup_{C_m}D_{C_m}.
\]
Since \(\mathcal G_{p,q}\) is equipped with the rotation data inherited from
the standard classical closed-braid diagram on \(S^2\), this capping procedure
recovers the original sphere:
\[
        \Sigma(\mathcal G_{p,q})\cong S^2.
\]
Thus we are not using only the underlying chord diagram; the supporting surface
is determined by the signed, arrowed Gauss data together with the verified
rotation data.

\begin{figure}[htbp]
\vspace{-0.4\baselineskip}

\begin{subfigure}[t]{0.35\textwidth}
\centering
\includegraphics[width=\linewidth]{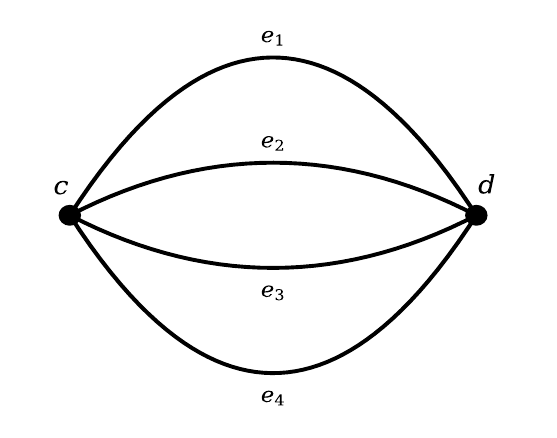}
\caption{Projected \(4\)-valent graph \(\Gamma\).}
\end{subfigure}
\hspace{0.06\textwidth}
\begin{subfigure}[t]{0.35\textwidth}
\centering
\includegraphics[width=\linewidth]{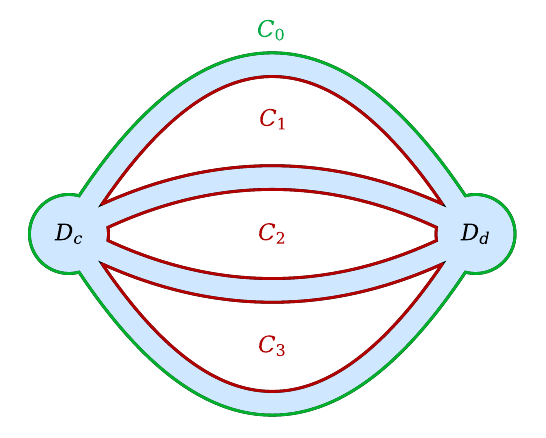}
\caption{The disk-band surface \(N(\Gamma)\) with boundary components.}
\end{subfigure}

\medskip

\begin{subfigure}[t]{0.35\textwidth}
\centering
\includegraphics[width=\linewidth]{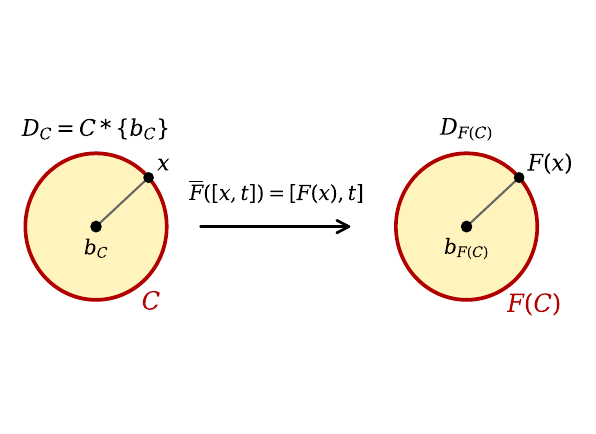}
\caption{Equivariant capping of one boundary component \(C\).}
\end{subfigure}
\hspace{0.06\textwidth}
\begin{subfigure}[t]{0.35\textwidth}
\centering
\includegraphics[width=\linewidth]{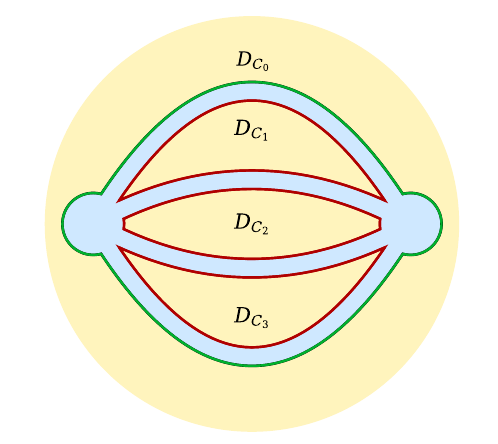}
\caption{Capping all boundary components to obtain the closed supporting surface.}
\end{subfigure}

\caption{A schematic model of the disk-band construction and capping step.
A projected \(4\)-valent graph \(\Gamma\) is thickened to a disk-band surface
\(N(\Gamma)\) by replacing vertices with crossing disks and edges with bands.
In this schematic, $\partial N(\Gamma)=C_0\sqcup C_1\sqcup C_2\sqcup C_3$,
where the green curve is the outer boundary component \(C_0\), and the red
curves are \(C_1,C_2,C_3\). The boundary components are capped by new disks
\(D_{C_i}\). In the classical case considered in this paper, these caps are
the complementary face disks, and the resulting closed supporting surface is
\(S^2\). }
\label{fig:disk-band-capping}
\vspace{-0.4\baselineskip}
\end{figure}

\FloatBarrier

We now show that the symmetries
\(R\) and \(T\) are realized by commuting homeomorphisms of this sphere.

\begin{lemma}\label{lem:realization}
The \(V_4\)-action generated by \(R\) and \(T\) is realized by commuting PL
homeomorphisms
\[
        h_R,h_T:S^2\longrightarrow S^2.
\]
These maps preserve the underlying immersed \(4\)-valent diagram and induce on
its Gauss data the actions \(R\) and \(T\). Moreover, \(h_R\) is
orientation-reversing and \(h_T\) is orientation-preserving.
\end{lemma}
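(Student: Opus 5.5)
The plan is to build \(h_R\) and \(h_T\) cell by cell on the decomposition of \(S^2=\Sigma(\mathcal G_{p,q})\) into crossing disks, edge-bands and capping disks. At each stage we extend the combinatorial actions of Lemmas~\ref{lem:reverse-circle}, \ref{lem:commuting} and \ref{lem:rotation}, choosing every extension canonically (linear, or by coning) so that commutativity holds automatically. The underlying reason this works is that the orientation-type data are consistent. \(T\) preserves the rotation data and \(R\) reverses it, so the assignment \(T\mapsto +1\), \(R\mapsto -1\) is a homomorphism \(V_4\to\{\pm1\}\). Hence the \(V_4\)-action on the graph \(\Gamma_{p,q}\), with its local orientation behaviour, is a genuine group action on the ribbon-graph data.

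\textbf{Vertex disks.} By Lemma~\ref{lem:crossings}, \(R\) and \(T\) permute the crossings. For each crossing \(c\), model the crossing disk \(D_c\) as a regular PL square whose four boundary intervals carry the half-edges in the cyclic order \(\Omega_c\). Lemma~\ref{lem:rotation} says that \(T\) sends \(\Omega_c\) to a cyclic rotation of \(\Omega_{T(c)}\), and that \(R\) sends it to a rotation of the reverse of \(\Omega_{R(c)}\). There is therefore a unique isometry \(D_c\to D_{F(c)}\) that is orientation-preserving for \(F=T\) and orientation-reversing for \(F=R\), sends the centre to the centre, and matches half-edges as \(F\) prescribes. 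Uniqueness of these isometries gives \(h_Rh_T=h_Th_R\) and \(h_F^2=1\) on the union of the vertex disks, by Lemma~\ref{lem:commuting}.

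\textbf{Edge bands.} Identify each band with \(I\times[-1,1]\), where the core \(I\) is a complementary arc of the Gauss circle. On the core, \(h_F\) is the linear extension already chosen. On the transverse factor it is the identity for \(T\) and \(-1\) for \(R\), so that the band maps agree with the vertex maps on the attaching intervals; the core reversal of Lemma~\ref{lem:reverse-circle} is accounted for here. These formulas are again canonical, so commutativity and involutivity persist. We now have commuting PL involutions on \(N(\Gamma_{p,q})\) that permute the boundary circles \(C_i\).

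\textbf{Capping.} Each \(h_F\) maps \(C_i\) homeomorphically onto some \(C_k\). Extend over the capping disks by coning: \(D_{C_i}\) is the cone on \(C_i\), and we send the cone point to the cone point. Coning is functorial, so commutativity and \(h_F^2=1\) survive, and we obtain PL involutions of \(S^2\) generating the action of \(V_4\). They preserve the image of \(\Gamma_{p,q}\), which is the immersed diagram, and they induce \(R\) and \(T\) on the Gauss data by construction. The orientation statement is then local: in a crossing disk, \(h_T\) preserves and \(h_R\) reverses the local orientation, and \(S^2\) is connected.

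\textbf{Main obstacle.} The hard step is the band step: checking that the transverse sign chosen on each band is compatible at both of its ends with the vertex isometries. This means confirming that the "sides" of a half-edge inside \(D_c\), as determined by \(\Omega_c\), are carried to the correct sides in \(D_{F(c)}\). I would verify this directly from the explicit rotated and reversed tuples computed in the proof of Lemma~\ref{lem:rotation}. That check is essentially the statement that the ribbon structure is \(V_4\)-equivariant with orientation character \(T\mapsto+1\), \(R\mapsto-1\).
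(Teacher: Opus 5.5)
Your strategy matches the paper's: equivariant crossing squares and bands with canonical affine extensions, then radial coning over the caps. However, the band step fails as you wrote it, because the transverse signs are swapped.

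Parametrize a band as \(I\times[-1,1]\), with \(I\) oriented along the knot and \(y\) chosen so that the product orientation is that of \(N(\Gamma_{p,q})\).
\begin{itemize}
\item At the end where the band leaves a crossing square, \(+y\) is the counterclockwise boundary direction of that square. At the end where it enters, \(-y\) is.
\item Each \(F\in\{R,T\}\) reverses the Gauss circle (Lemma~\ref{lem:reverse-circle}), so it sends leaving ends to entering ends.
\item The square isometry for \(T\) is a rotation by \(\pi\): position \(k\) of \(\Omega_c\) goes to position \(k+2\) of \(\Omega_{T(c)}\). It preserves the counterclockwise direction, so on attaching intervals \(T\) acts by \(y\mapsto -y\).
\item The isometry for \(R\) is a reflection, so \(R\) acts by \(y\mapsto y\).
\end{itemize}
Put differently, a band map that reverses the core and fixes \(y\) reverses orientation. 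It therefore cannot glue, on a connected surface, to the orientation-preserving \(T\)-isometries of the squares.

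This is not an artifact of conventions. By Lemma~\ref{lem:no-endpoints}, \(T\) and \(R\) each have fixed points in the interiors of arcs, so there are invariant bands. For \(T\), the arc from \(u_{0,(p-3)/2}\) to \(u_{0,(p-1)/2}\) is invariant, since \(T\) swaps its endpoints. On such a band the same coordinates serve for source and target. Your rule makes \(h_T\) the reflection \((s,y)\mapsto(1-s,y)\) there, which fixes a whole transverse segment. That is impossible for an orientation-preserving involution, and it contradicts \(\Fix(h_T)\cong S^0\), which Lemma~\ref{lem:fixed-T} needs. Likewise, your \(h_R\) would be an orientation-preserving half-turn on an \(R\)-invariant band.

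The repair is either to swap the signs (\(-1\) for \(T\), \(+1\) for \(R\)), or to do as the paper does. That means taking each band map to be the affine extension of what the square isometries do on its two attaching intervals, after checking as above that both ends give the same transverse sign. With that correction, the rest of your argument goes through and agrees with the paper's proof: uniqueness of the square isometries gives commutativity, coning is functorial, and orientation can be read off at a crossing disk.
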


\begin{proof}
By Lemmas~\ref{lem:reverse-circle} and~\ref{lem:crossings}, the maps
\(R\) and \(T\) preserve both the chord pairings and the adjacency of
successive Gauss endpoints. Hence they induce automorphisms of
\(\Gamma_{p,q}\). By Lemma~\ref{lem:rotation}, \(T\) preserves the rotation
data and \(R\) reverses it.

Thicken \(\Gamma_{p,q}\) according to this rotation data. Since \(T\)
preserves the cyclic order at every crossing disk, it induces an
orientation-preserving PL homeomorphism of the disk-band surface
\(N(\Gamma_{p,q})\). Since \(R\) reverses the cyclic order at every crossing
disk, it induces an orientation-reversing PL homeomorphism of
\(N(\Gamma_{p,q})\). Choose the crossing disks and edge-bands equivariantly with respect to
the finite group \(\langle R,T\rangle\). On every crossing disk and
edge-band, extend the induced permutation of the marked attaching
intervals by the corresponding affine PL map. This construction is
functorial in the graph automorphism; consequently it realizes the
entire \(V_4\)-action on \(N(\Gamma_{p,q})\), rather than merely
realizing the two generators separately. In particular, the resulting
homeomorphisms commute. Since these maps are homeomorphisms of \(N(\Gamma_{p,q})\), they permute the
boundary components of \(N(\Gamma_{p,q})\).

It remains to cap the boundary components equivariantly. For each boundary
component \(C\subset\partial N(\Gamma_{p,q})\), attach a disk \(D_C\). We view
\(D_C\) as the cone
\[
        D_C=C*\{b_C\},
\]
where \(b_C\) is the cone point. If \(F\in\langle R,T\rangle\) sends a boundary
component \(C\) to a boundary component \(F(C)\), define the extension over
the corresponding capping disk by
\[
        \overline F([x,t])=[F(x),t]\in D_{F(C)},
        \qquad x\in C,\quad 0\leq t\leq 1.
\]
Here \(t=1\) corresponds to the boundary and \(t=0\) to the cone point. Thus
the action on each boundary circle is extended radially over the corresponding
capping disk; in particular, if \(F(C)=C'\), then the cone point \(b_C\) is sent
to \(b_{C'}\). This extension is PL after choosing compatible triangulations,
and it is well-defined also when \(C\) is setwise fixed by a nontrivial element.

The extensions satisfy
\[
        \overline{FG}=\overline F\,\overline G
\]
on every capping disk, because both sides send \([x,t]\) to
\([FG(x),t]\). Thus the \(V_4\)-action extends over the capped surface without
changing the group law. Since the capped supporting surface is \(S^2\), we obtain
commuting PL homeomorphisms
\[
        h_R,h_T:S^2\to S^2.
\]
Moreover \(h_T\) is orientation-preserving and \(h_R\) is
orientation-reversing, according as the corresponding automorphism preserves
or reverses the rotation data.
\end{proof}

\begin{remark}
The capping disks \(D_C\) are distinct from the small crossing disks used to
construct \(N(\Gamma_{p,q})\). The crossing disks are part of the disk-band
neighbourhood of the projected graph, whereas the disks \(D_C\) are attached
afterwards to fill the boundary components of that disk-band surface. In the
classical case considered here, these capping disks are precisely the face
disks needed to recover the sphere \(S^2\).
\end{remark}

The surface maps constructed in Lemma~\ref{lem:realization} preserve
the underlying projected diagram. They do not by themselves preserve the full
over/under-decorated diagram. Instead, Lemma~\ref{lem:over-under} shows that
they exchange over- and under-passing branches, and this exchange will be
compensated by the normal reversal in the \(S^3\)-thickening.
  
\begin{lemma}\label{lem:thickening}
The surface maps \(h_R,h_T\) thicken to commuting ambient involutions
\[
        \widehat{R},\widehat{T}:S^3\longrightarrow S^3
\]
preserving \(K_{p,q}\), where \(\widehat{R}\) is orientation-preserving and
\(\widehat{T}\) is orientation-reversing.
\end{lemma}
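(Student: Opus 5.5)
We will realise $S^3$ as two balls glued along the supporting sphere, using a product collar around it. Concretely, write
\[
        S^3 = B_- \cup \bigl(S^2\times[-1,1]\bigr) \cup B_+ ,
\]
where $S^2=\Sigma(\mathcal G_{p,q})$ is the capped supporting sphere of Lemma~\ref{lem:realization} and $B_\pm$ are cones on $S^2\times\{\pm1\}$. The knot is placed in the collar in the usual way for a diagram on $S^2$. Away from small crossing disks it lies on $S^2\times\{0\}$. Inside the crossing disk of $c$ it consists of two arcs: the over-arc is pushed to level $+\tfrac12$ and the under-arc to level $-\tfrac12$. Each arc is interpolated back to level $0$ by bump functions supported on the edge-bands, chosen depending only on the position along the band.

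For $F\in\langle R,T\rangle$ we define the ambient map on the collar by
\[
        \widehat F(x,s) = \bigl(h_F(x),\,-s\bigr),
\]
and extend it conically over $B_\pm$. Since $\widehat F$ sends level $s$ to level $-s$, it interchanges $B_+$ and $B_-$; the cone extension $[y,t]\mapsto[\widehat F(y),t]$ respects this. The key point is that this normal reversal is exactly what Lemma~\ref{lem:over-under} demands. The map $h_F$ carries the over-branch at $c$ to the under-branch at $F(c)$, and negating $s$ carries level $+\tfrac12$ to $-\tfrac12$. Hence $\widehat F(K_{p,q})=K_{p,q}$, provided the crossing disks, bands and height profiles are chosen $V_4$-equivariantly. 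That choice is possible because the action on $N(\Gamma_{p,q})$ was built functorially from affine identifications in the proof of Lemma~\ref{lem:realization}. We take the height profile to be a function of the band parameter alone, symmetric under the flip of a band, so that it is invariant under these identifications.

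Commutativity and the group law follow at once. The map $F\mapsto h_F$ is a homomorphism, and $s\mapsto -s$ commutes with everything and squares to the identity. So $F\mapsto\widehat F$ is a homomorphism from $V_4$; in particular $\widehat R,\widehat T$ are commuting involutions. For orientations, the collar orientation is the product orientation. Normal reversal contributes a factor $-1$, and $h_T$ is orientation-preserving while $h_R$ is orientation-reversing by Lemma~\ref{lem:realization}. Therefore $\widehat T$ is orientation-reversing and $\widehat R$ is orientation-preserving, as claimed.

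The main obstacle is to check that $\widehat F$ maps the knot onto itself exactly, not merely up to isotopy, near each crossing. Two things must match there. First, the two arcs at $c$ must go to the two arcs at $F(c)$ with the correct heights; this uses Lemma~\ref{lem:over-under} together with the compatibility of $h_F$ with the crossing-disk parametrisation. Second, $\widehat F$ must send $K_{p,q}$ to itself as a set; the reversal of the strand orientation is then automatic from Lemma~\ref{lem:reverse-circle}. The point needing care is crossings fixed by some $F$, where $h_F$ acts on the crossing disk as a rotation or reflection. There we must ensure that $h_F$ sends the over-arc to the under-arc setwise, which is again Lemma~\ref{lem:over-under}, and that the chosen height function is invariant under that disk symmetry. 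We would handle this by taking the height to be a radial bump times $\pm1$ according to branch, which is invariant under any isometry of the disk preserving the pair of arcs. Identifying the fixed sets themselves is deferred to Section~\ref{sec:fixed_sets}.
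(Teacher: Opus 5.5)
Your proposal is correct and is essentially the paper's argument. It uses the same suspension model of \(S^3\), the same maps \(\widehat F(x,s)=(h_F(x),-s)\), the same appeal to Lemma~\ref{lem:over-under} to make the heights anti-invariant, and the same orientation count. The only difference is in the height function: the paper makes it anti-equivariant by averaging an arbitrary PL function against the character \(\chi(R)=\chi(T)=-1\), whereas you build it directly as a signed radial bump on equivariantly chosen crossing disks.

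One small slip: you should impose the formula \((h_F(x),-s)\) only for \(F\in\{R,T\}\); for general \(F\) it reads \((h_F(x),\chi(F)s)\). As you wrote it, \(\widehat 1\neq\mathrm{id}\) and \(F\mapsto\widehat F\) is not a homomorphism. Your conclusions about \(\widehat R\) and \(\widehat T\) are unaffected.
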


\begin{proof}

Since \(\Sigma(\mathcal G_{p,q})\cong S^2\), realize \(S^3\) as its suspension:
\[
S^3 \cong \Sigma S^2
= \bigl(S^2\times[-1,1]\bigr)\big/\sim,
\]
where \(\sim\) is the equivalence relation defined by
\[
(x,-1)\sim(y,-1),\qquad (x,1)\sim(y,1)
\quad\text{for all }x,y\in S^2.
\]
Denote the two corresponding equivalence classes by \(S_-\) and \(S_+\).

Let
\[
        \gamma:S^1\longrightarrow S^2
\]
denote the immersion underlying the knot diagram. 

Let
\[
        G=\langle R,T\rangle\cong \mathbb Z_2\oplus\mathbb Z_2,
\]
and define the character
\[
        \chi:G\longrightarrow\{\pm1\}
\]
by
\[
        \chi(R)=\chi(T)=-1.
\]
Thus
\[
        \chi(1)=\chi(RT)=1.
\]

Prescribe the value \(+\delta\) at every over-passing marked preimage and
the value \(-\delta\) at every under-passing marked preimage. By
Lemma~\ref{lem:over-under}, these prescribed values are
\(\chi\)-equivariant: for every marked preimage \(e\in E_{p,q}\) and every
\(g\in G\), the prescribed value at \(g(e)\) is \(\chi(g)\) times the
prescribed value at \(e\).

Choose any sufficiently small piecewise-linear function
\[
        \eta_0:S^1\longrightarrow(-1,1)
\]
taking these prescribed values at all marked preimages, and define
\[
        \eta(z)
        =
        \frac{1}{4}\sum_{g\in G}\chi(g)\eta_0(gz).
\]
Then \(\eta\) is piecewise linear. Moreover, for \(h\in G\),
\[
\begin{aligned}
        \eta(hz)
        &=
        \frac{1}{4}\sum_{g\in G}\chi(g)\eta_0(ghz)\\
        &=
        \frac{1}{4}\sum_{k\in G}\chi(kh^{-1})\eta_0(kz)\\
        &=
        \chi(h)\eta(z),
\end{aligned}
\]
where we used that \(G\) is abelian and \(\chi(h^{-1})=\chi(h)\).
In particular,
\[
        \eta(Rz)=-\eta(z),
        \qquad
        \eta(Tz)=-\eta(z).
\]

The averaging does not alter the prescribed values at the marked
preimages. Indeed, if \(e\in E_{p,q}\) has prescribed value \(a_e\), then
the prescribed value at \(g(e)\) is \(\chi(g)a_e\), and hence
\[
        \eta(e)
        =
        \frac{1}{4}\sum_{g\in G}\chi(g)\eta_0(g(e))
        =
        \frac{1}{4}\sum_{g\in G}\chi(g)^2a_e
        =
        a_e.
\]
Thus \(\eta\) takes the values \(+\delta\) and \(-\delta\) on the
over-passing and under-passing marked preimages, respectively. Because each summand \(\chi(g)\eta_0(gz)\) lies in \((-1,1)\) and
\((-1,1)\) is convex, one has
\[
        \eta(S^1)\subset(-1,1).
\]. Finally, if \(z\) is fixed by \(R\) or by \(T\), the corresponding
equivariance identity forces
\[
        \eta(z)=0.
\]

The lifted knot is the image of the embedding
\[
        \widetilde{\gamma}:S^1\longrightarrow \Sigma S^2,
        \qquad
        \widetilde{\gamma}(z)=[\gamma(z),\eta(z)].
\]
It is embedded because the only double points of \(\gamma\) are the prescribed
crossings, and their two preimages are separated by the distinct heights
\(+\delta\) and \(-\delta\).

For \(-1<s<1\), define
\[
\widehat{R}(x,s)=(h_R(x),-s),\qquad
\widehat{T}(x,s)=(h_T(x),-s).
\]
At the suspension points, set
\[
        \widehat{R}(S_+)=S_-,
        \qquad
        \widehat{R}(S_-)=S_+,
\]
and similarly
\[
        \widehat{T}(S_+)=S_-,
        \qquad
        \widehat{T}(S_-)=S_+.
\]
These formulas are compatible with the suspension quotient, since both maps
send \(S^2\times\{1\}\) to \(S^2\times\{-1\}\) and conversely. In particular,
no suspension point is fixed by either map.

Since \(h_R\) and \(h_T\) commute, and since both lifted maps reverse the normal
coordinate, the maps \(\widehat{R}\) and \(\widehat{T}\) are commuting
involutions. The map \(h_R\) reverses the orientation of \(S^2\), and the
coordinate change \(s\mapsto -s\) reverses the normal direction. Thus
\(\widehat{R}\) is orientation-preserving on \(S^3\). The map \(h_T\) preserves
the orientation of \(S^2\), so \(\widehat{T}\) is orientation-reversing on
\(S^3\).

For \(F\in\{R,T\}\), the realization on the supporting sphere satisfies
\[
        h_F\circ\gamma=\gamma\circ F.
\]
Moreover, the choice of the height function gives
\[
        \eta(Fz)=-\eta(z).
\]
Consequently,
\[
\begin{aligned}
        \widehat F\bigl(\widetilde{\gamma}(z)\bigr)
        &=
        \widehat F[\gamma(z),\eta(z)] \\
        &=
        [h_F(\gamma(z)),-\eta(z)] \\
        &=
        [\gamma(Fz),\eta(Fz)] \\
        &=
        \widetilde{\gamma}(Fz).
\end{aligned}
\]
Thus both \(\widehat R\) and \(\widehat T\) preserve the embedded knot
\(K_{p,q}\).
\end{proof}

\begin{lemma}\label{lem:fixed-points-on-knot}
For \(F\in\{R,T\}\), one has
\[
        \Fix(\widehat F)\cap K_{p,q}
        =
        \widetilde{\gamma}\bigl(\Fix(F)\bigr).
\]
\end{lemma}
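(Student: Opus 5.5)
We prove the two inclusions separately, using the equivariance identity \(\widehat F\circ\widetilde\gamma=\widetilde\gamma\circ F\) from the proof of Lemma~\ref{lem:thickening} together with the injectivity of \(\widetilde\gamma\). Here \(F\in\{R,T\}\) denotes the PL involution of the Gauss circle \(S^1\) obtained by extending the action on \(E_{p,q}\) linearly over the complementary arcs.

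For the inclusion \(\supseteq\), let \(z\in\Fix(F)\). Then
\[
        \widehat F(\widetilde\gamma(z))=\widetilde\gamma(Fz)=\widetilde\gamma(z),
\]
so \(\widetilde\gamma(z)\) lies in \(\Fix(\widehat F)\cap K_{p,q}\). This direction is immediate.

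For the inclusion \(\subseteq\), take a point \(y=\widetilde\gamma(z)\) of \(K_{p,q}\) with \(\widehat F(y)=y\). By equivariance, \(\widetilde\gamma(Fz)=\widehat F(y)=y=\widetilde\gamma(z)\). Since \(\widetilde\gamma\) is an embedding (Lemma~\ref{lem:thickening}), it is injective, and hence \(Fz=z\). The only point that needs care here is the injectivity of \(\widetilde\gamma\), so I would re-verify it explicitly rather than simply cite it. Suppose \([\gamma(z),\eta(z)]=[\gamma(z'),\eta(z')]\). Since \(\eta\) takes values in \((-1,1)\), neither point is a suspension point, so \(\gamma(z)=\gamma(z')\) and \(\eta(z)=\eta(z')\). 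If \(z\neq z'\), then \(\gamma(z)\) is a double point of \(\gamma\), so \(\{z,z'\}=\{u_c,v_c\}\) for some crossing \(c\). The averaged height function takes the values \(\pm\delta\) at these two endpoints, with opposite signs, by the prescribed over/under data; this contradicts \(\eta(z)=\eta(z')\).

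The main subtlety I expect is that \(\widetilde\gamma\) must be a genuine parametrization of \(K_{p,q}\), with \(\gamma\) equivariant on all of \(S^1\) and not merely on the marked points \(E_{p,q}\). This equivariance is the identity \(h_F\circ\gamma=\gamma\circ F\) stated in the proof of Lemma~\ref{lem:thickening}. It holds because \(h_F\) is constructed on the edge-bands by affine maps that match the linear extension of \(F\) over the complementary arcs. Granting that identity, the argument above is purely formal. As a byproduct, since \(\eta\) vanishes on \(\Fix(F)\), the points of \(\Fix(\widehat F)\cap K_{p,q}\) lie on the middle level \(S^2\times\{0\}\). This is consistent with the fact that \(\widehat F\) acts there as \(h_F\). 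Later sections will use this when counting fixed points: two fixed points for \(R\), coming from its reversal of the Gauss circle, and the \(S^0\) for \(T\).
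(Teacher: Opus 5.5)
Your proof is correct and matches the paper's: the inclusion $\supseteq$ is immediate from the equivariance $\widehat F\circ\widetilde\gamma=\widetilde\gamma\circ F$, and the inclusion $\subseteq$ follows from that identity together with the injectivity of $\widetilde\gamma$. Your explicit re-check of injectivity (double points of $\gamma$ occur only at crossings, and their two preimages receive the opposite heights $\pm\delta$) just spells out the justification the paper already gives in the proof of Lemma~\ref{lem:thickening} for $\widetilde\gamma$ being an embedding.
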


\begin{proof}
If \(Fz=z\), equivariance gives
\[
        \widehat F\bigl(\widetilde{\gamma}(z)\bigr)
        =
        \widetilde{\gamma}(Fz)
        =
        \widetilde{\gamma}(z).
\]
Conversely, suppose
\(\widehat F(\widetilde{\gamma}(z))=\widetilde{\gamma}(z)\).
Since
\[
        \widehat F\circ\widetilde{\gamma}
        =
        \widetilde{\gamma}\circ F,
\]
we have
\[
        \widetilde{\gamma}(Fz)=\widetilde{\gamma}(z).
\]
The map \(\widetilde{\gamma}\) is an embedding, and hence is injective.
Therefore \(Fz=z\).
\end{proof}

\begin{lemma}\label{lem:equivariant-smoothing}
After a \(V_4\)-equivariant PL isotopy of \(K_{p,q}\), supported away from
\[
        K_{p,q}\cap
        \bigl(\Fix(\widehat R)\cup\Fix(\widehat T)\bigr),
\]
the \(V_4\)-action on the PL pair \((S^3,K_{p,q})\) constructed above can be
realized by a smooth \(V_4\)-action on the standard smooth \(3\)-sphere.
In particular, the fixed-set intersection properties established below are
unchanged.
\end{lemma}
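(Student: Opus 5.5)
We will smooth the \(V_4\)-action, and then the knot, using the explicit product structure of the construction rather than any general smoothing theorem. First we make the surface action smooth. The group \(G=\langle h_R,h_T\rangle\) acts on \(S^2\) by PL homeomorphisms. A finite group acting PL on a surface is PL-conjugate to a smooth action: one takes an equivariant triangulation and smooths it equivariantly near vertices and edges. Alternatively one uses the classification of finite group actions on \(S^2\). Each involution, and the group \(V_4\), acting locally linearly on \(S^2\) is topologically, and in dimension \(2\) PL, conjugate to an orthogonal action. Concretely, \(h_T\) and \(h_R h_T\) are orientation-preserving involutions, hence rotations by \(\pi\). The map \(h_R\) is orientation-reversing, hence a reflection or the antipodal map. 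So we will choose a PL homeomorphism \(\phi:S^2\to S^2\) conjugating \(G\) into \(O(3)\). We replace \(\gamma\) by \(\phi\circ\gamma\) and \(h_F\) by \(\phi h_F\phi^{-1}\). Nothing in Lemmas~\ref{lem:thickening} and~\ref{lem:fixed-points-on-knot} is affected, since they only use equivariance and injectivity.

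Next we pass to \(S^3\). We identify the suspension \(\Sigma S^2\) with the unit sphere in \(\mathbb R^3\times\mathbb R\) via \((x,s)\mapsto(\cos(\pi s/2)x,\sin(\pi s/2))\). Under this identification, \(\widehat F(x,s)=(h_F x,-s)\) becomes the linear map \(h_F\oplus(-1)\in O(4)\). Thus \(\widehat R\) and \(\widehat T\) become a linear, hence smooth, \(V_4\)-action on the standard \(S^3\). The identification is a homeomorphism, and it is PL away from the suspension points. The knot misses the suspension points because \(|\eta|<1\), so the PL structure is unchanged near the knot.

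It remains to smooth the knot \(\widetilde\gamma(S^1)=\{[\gamma(z),\eta(z)]\}\). This curve is piecewise smooth with finitely many corners, and its corner set is \(V_4\)-invariant. We will round the corners equivariantly in two steps. First pick one corner from each free \(G\)-orbit and round it inside a small ball \(B\) whose translates \(gB\) are pairwise disjoint and avoid \(\Fix(\widehat R)\cup\Fix(\widehat T)\). Then transport that rounding to the other corners of the orbit by the group action; this is an equivariant isotopy supported away from the fixed points. Corners lying on a fixed point of \(R\) or \(T\) on the circle are handled differently. By Lemma~\ref{lem:fixed-points-on-knot} these are exactly the knot points on the fixed sets, and the statement forbids isotoping near them. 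We will instead avoid creating corners there. Choose the PL function \(\eta_0\), and the PL parametrisations of \(\gamma\), to be linear on a neighbourhood of each such point. The averaged \(\eta\) is then linear there as well. Since \(\eta(Fz)=-\eta(z)\) and \(F\) reverses the circle, the lift crosses the fixed set transversally as a straight segment.

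The main obstacle is the first step: making the immersed curve \(\gamma\) smooth at the crossing and fixed points compatibly with the orthogonalised action. If the PL conjugacy \(\phi\) bends edges near a fixed point, the segment argument fails. To handle this, I would first make \(\phi\) linear in a small invariant neighbourhood of each isolated fixed point and of the fixed circle of \(h_R\), using the local linearity of the PL action. Then I would perform the straightening of \(\gamma\) only outside these neighbourhoods. Once this is in place, the isotopy is supported away from \(K_{p,q}\cap(\Fix\widehat R\cup\Fix\widehat T)\), as the statement requires. The fixed-set intersections, and the topological types of \(\Fix(\widehat R)\) and \(\Fix(\widehat T)\) determined later, are therefore unchanged.
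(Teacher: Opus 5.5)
Your argument is sound in outline, but it takes a genuinely different route from the paper.

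\textbf{The paper's route.} The paper works directly in dimension three. It chooses a \(V_4\)-invariant triangulation of \(S^3\) with \(K_{p,q}\) as a subcomplex. It then applies Lange's equivariant smoothing theorem to get a compatible smooth structure in which the action is smooth, and rounds the corners of the polygonal knot orbitwise away from the fixed points. Finally it uses uniqueness of smooth structures on \(S^3\) to transport everything to the standard sphere.

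\textbf{Your route.} You exploit the suspension structure of the construction. Once the action on the supporting \(S^2\) is orthogonalized, which is a purely two-dimensional classification input, the identification \((x,s)\mapsto(\cos(\pi s/2)x,\sin(\pi s/2))\) turns \(\widehat F\) into \(h_F\oplus(-1)\in O(4)\). So you need neither a three-dimensional equivariant smoothing theorem nor uniqueness of smooth structures on \(S^3\).

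\textbf{What each buys.} Your route gives more than the lemma asks: the action is conjugate to a linear one. Then \(\Fix(\widehat R)\) (a great circle) and \(\Fix(\widehat T)\) (two points) can be read off from the eigenvalues, and the fixed-set identifications of Section~\ref{sec:fixed_sets} become immediate. It also addresses explicitly why \(K_{p,q}\) is smooth at its four fixed points. The paper's proof treats this only by noting that these points lie in interiors of diagram arcs. The price is the bookkeeping you flag: you must arrange that \(\phi\) is linear near those points, so that the straight band cores go to smooth arcs. On those cores \(h_R\) and \(h_T\) act affinely by the construction of Lemma~\ref{lem:realization}. The paper's argument is shorter and insensitive to such local details, but rests on a substantial black box.

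\textbf{Corrections.}
\begin{enumerate}
\item The element \(h_Rh_T\) is orientation-reversing, not a rotation. Indeed \(RT\) preserves the orientation of the knot circle and exchanges \(u\)- and \(v\)-labels, so it acts freely. Hence \(R\) swaps the two points of \(\Fix(T)\), so \(h_R\) swaps the two fixed points of \(h_T\). In the orthogonal model, \(h_R\) is therefore the reflection in the plane perpendicular to the axis of \(h_T\), and \(h_Rh_T=-I\).
\item Only the group-level statement (finite group actions on \(S^2\) are conjugate to orthogonal ones) yields a simultaneous conjugacy. That is the result to cite, not the per-involution classification.
\item The round sphere carries no intrinsic PL structure, so ``PL away from the suspension points'' should read ``piecewise smooth''. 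One clean way to arrange this is to take the target of \(\phi\) to be the boundary of a \(G\)-invariant convex polyhedron, followed by radial projection. This also gives ``linear near the fixed points'' a precise meaning.
\item Re-choosing \(\eta_0\) is unnecessary. In an affine parametrization of the complementary arc through a fixed point \(z_0\), \(\eta\) is odd about \(z_0\). An odd piecewise-linear function is automatically linear near \(0\).
\end{enumerate}
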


\begin{proof}
Choose a \(V_4\)-invariant triangulation of \(S^3\) for which
\(K_{p,q}\) is an invariant one-dimensional subcomplex. By the
equivariant smoothing theorem of Lange~\cite{LangeSmoothing}, this PL
\(V_4\)-action admits a compatible equivariant smooth structure. The finitely many corners of the invariant polygonal knot may be rounded
simultaneously on their \(V_4\)-orbits. Since the points of
\[
        K_{p,q}\cap
        \bigl(\Fix(\widehat R)\cup\Fix(\widehat T)\bigr)
\]
lie in interiors of diagram arcs, the rounding neighbourhoods may be chosen
disjoint from these points. The resulting equivariant PL isotopy therefore
preserves the fixed-set intersections and yields a smooth \(V_4\)-invariant
knot in the same PL isotopy class. Finally, the
smooth structure on \(S^3\) is diffeomorphic to the standard one, so
the action may be transported to the standard smooth \(3\)-sphere.
\end{proof}

\section{Fixed sets}\label{sec:fixed_sets}

It remains to identify the fixed sets. By Kerékjártó's classification of
periodic homeomorphisms of the sphere, in the modern formulation of
Constantin--Kolev~\cite[Theorem~4.1]{ConstantinKolev1994}, every periodic
homeomorphism of \(S^2\) is topologically conjugate to an orthogonal
transformation. Consequently, a nontrivial orientation-preserving involution
of \(S^2\) has fixed set homeomorphic to \(S^0\), while an
orientation-reversing involution of \(S^2\) with nonempty fixed set has fixed
set homeomorphic to \(S^1\).
  
\begin{lemma}\label{lem:no-endpoints}
Neither \(R\) nor \(T\) fixes a marked endpoint of the Gauss diagram.
Consequently, the restrictions of \(R\) and \(T\) to the knot circle have
exactly two fixed points each, all lying in interiors of diagram arcs.
\end{lemma}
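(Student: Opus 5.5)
The plan is a short combinatorial check on the marked endpoints, followed by an elementary count of fixed points for an orientation-reversing PL involution of a circle.

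\textbf{Step 1: no marked endpoint is fixed.} For \(T\), a fixed endpoint would need \(T(u_{j,r})=u_{j,r}\) or \(T(v_{j,r})=v_{j,r}\). Either one forces \(r=p-2-r\), so \(2r=p-2\). This is impossible, since \(p-2\) is odd. For \(R\), the map always exchanges the labels \(u\) and \(v\), so \(R(e)\neq e\) for every \(e\in E_{p,q}\). This settles the first assertion.

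\textbf{Step 2: reduce to an index model.} By Remark~\ref{rem:one-component}, \(S\) is a single cycle of length \(N=2(p-1)q\). Fix a base point and write the marked endpoints cyclically as \(e_k=S^k(e_0)\), \(k\in\ZZ/N\). Let \(F\in\{R,T\}\) and put \(F(e_0)=e_c\). Lemma~\ref{lem:reverse-circle} gives \(FS=S^{-1}F\), hence \(FS^k=S^{-k}F\) and
\[
F(e_k)=e_{c-k}\qquad\text{for all }k\in\ZZ/N .
\]

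\textbf{Step 3: determine the parity of \(c\).} If \(c\) were even, then \(k=c/2\) would satisfy \(F(e_k)=e_k\), contradicting Step 1. Hence \(c\) is odd.

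\textbf{Step 4: count fixed points.} Let \(a_k\) be the complementary arc from \(e_k\) to \(e_{k+1}\). The map \(F\) sends \(a_k\) to \(a_{c-k-1}\), and it is affine on each arc by the construction after Lemma~\ref{lem:commuting}.
\begin{itemize}
\item A fixed point of \(F\) cannot be a marked point, by Step 1.
\item A fixed point in the interior of \(a_k\) forces \(F(a_k)=a_k\), i.e.\ \(2k+1\equiv c\pmod N\).
\item Conversely, when \(F(a_k)=a_k\), the map restricts to an affine orientation-reversing self-map of the arc. Such a map has exactly one fixed point, namely the midpoint.
\end{itemize}
Since \(c\) is odd and \(N\) is even, the congruence \(2k\equiv c-1\pmod N\) has exactly two solutions \(k\) modulo \(N\). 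Therefore \(F\) has exactly two fixed points on the knot circle, and both lie in interiors of diagram arcs.

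The main subtlety is Step 2: converting the relation \(FS=S^{-1}F\) into the explicit reflection formula \(F(e_k)=e_{c-k}\). This requires that \(S\) be a single \(N\)-cycle, which is exactly Remark~\ref{rem:one-component}. The remaining steps are elementary parity and counting arguments.
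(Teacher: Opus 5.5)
Your proof is correct, but the second half takes a different route from the paper. Step 1 is identical to the paper's argument.

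For the second assertion, the paper notes that the linear extensions of \(R\) and \(T\) are orientation-reversing PL involutions of the knot circle. It then cites the general fact that such an involution has exactly two fixed points, which lie in arc interiors by Step 1.

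You prove this count directly instead:
\begin{enumerate}
\item You use Remark~\ref{rem:one-component} to model \(F\) on the marked points as the reflection \(k\mapsto c-k\) of \(\ZZ/N\).
\item You translate Step 1 into the parity condition that \(c\) is odd.
\item You show the fixed points are exactly the midpoints of the two arcs \(a_k\) with \(2k+1\equiv c\pmod N\).
\end{enumerate}

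Your route is self-contained and yields extra information. The two fixed points are midpoints of arcs that are antipodal in the cyclic order, namely \(a_k\) and \(a_{k+N/2}\). It also makes clear that, for a reflection of an even cycle, \emph{no fixed vertex} is equivalent to \emph{exactly two fixed edges}.

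The paper's route is shorter and does not depend on the specific affine extension over the arcs. That difference is minor: your midpoint step works equally well for any orientation-reversing homeomorphism of an arc swapping its endpoints, by monotonicity and the intermediate value theorem.
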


\begin{proof}
For \(T\), a fixed marked endpoint would require
\[
        r=p-2-r,
\]
equivalently
\[
        2r=p-2.
\]
Since \(p-2\) is odd, this has no integer solution. For \(R\), fixed marked
endpoints are impossible because \(R\) exchanges \(u\)- and \(v\)-endpoints.

By Lemma~\ref{lem:reverse-circle}, both endpoint maps reverse the cyclic order
defined by the successor map. By the extension described above, they are orientation-reversing PL
involutions of the knot circle. An orientation-reversing involution of a circle has exactly
two fixed points. Since no marked endpoint is fixed, the fixed points lie in
interiors of diagram arcs. 
\end{proof}
  
\begin{lemma}\label{lem:fixed-R}
The involution \(\widehat{R}\) is a strong inversion of \(K_{p,q}\).
Moreover, Lemma~\ref{lem:reverse-circle} shows that
\(\widehat R|_{K_{p,q}}\) reverses the orientation of the knot.
\end{lemma}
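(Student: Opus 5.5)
To prove that \(\widehat R\) is a strong inversion, we need four facts: \(\widehat R\) is an orientation-preserving involution of \(S^3\); \(\widehat R(K_{p,q})=-K_{p,q}\); \(\Fix(\widehat R)\cong S^1\); and \(\#(K_{p,q}\cap\Fix(\widehat R))=2\). Lemma~\ref{lem:thickening} already gives that \(\widehat R\) is an orientation-preserving involution preserving \(K_{p,q}\) setwise. From the identity \(\widehat R\circ\widetilde\gamma=\widetilde\gamma\circ R\) in its proof, together with Lemma~\ref{lem:reverse-circle} (\(RS=S^{-1}R\), so \(R\) reverses the cyclic order of the Gauss circle and hence its PL extension reverses the orientation of \(S^1\)), we get that \(\widehat R\) maps \(K_{p,q}\) to itself with reversed orientation, i.e.\ \(\widehat R(K_{p,q})=-K_{p,q}\). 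For the count on the knot, combine Lemma~\ref{lem:fixed-points-on-knot} with Lemma~\ref{lem:no-endpoints}: \(\Fix(\widehat R)\cap K_{p,q}=\widetilde\gamma(\Fix(R))\), and \(\Fix(R)\) consists of exactly two points in arc interiors. Since \(\widetilde\gamma\) is injective, the intersection has exactly two points.

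It remains to identify \(\Fix(\widehat R)\). In the suspension model, \(\widehat R\) swaps \(S_+\) and \(S_-\), so neither suspension point is fixed. A point \([x,s]\) with \(-1<s<1\) is fixed iff \(h_R(x)=x\) and \(s=-s\), so
\[
        \Fix(\widehat R)=\Fix(h_R)\times\{0\}.
\]
Thus \(\Fix(\widehat R)\cong\Fix(h_R)\), and the task reduces to the surface involution. By Lemma~\ref{lem:realization}, \(h_R\) is an orientation-reversing involution of \(S^2\). By the Kerékjártó/Constantin--Kolev classification, it is conjugate to an orthogonal one, so its fixed set is either empty (antipodal type) or a circle.

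The main obstacle is excluding the fixed-point-free case, and there are two ways to do it. First, the two fixed points \(z\in\Fix(R)\subset S^1\) give points \(\gamma(z)\in S^2\) with \(h_R(\gamma(z))=\gamma(Rz)=\gamma(z)\), so \(\Fix(h_R)\neq\emptyset\) and hence \(\Fix(h_R)\cong S^1\). Second, more robustly, \(\eta(z)=0\) at \(R\)-fixed \(z\) (shown in Lemma~\ref{lem:thickening}), so \(\widetilde\gamma(z)=[\gamma(z),0]\) lies in \(\Fix(\widehat R)\) directly. Either way \(\Fix(\widehat R)\cong S^1\).

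Finally, we must check that these properties survive the equivariant smoothing of Lemma~\ref{lem:equivariant-smoothing}. That lemma's isotopy is supported away from the fixed points on the knot and the action is merely transported by a conjugation. Both the fixed-set topology and the intersection count are therefore preserved, which completes the proof that \(\widehat R\) is a strong inversion.
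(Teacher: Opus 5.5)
Your proposal is correct and follows the paper's proof essentially step for step: you reduce \(\Fix(\widehat R)\) to \(\Fix(h_R)\) via the suspension formula, use Ker\'ekj\'art\'o together with the fixed points of \(R\) on the knot circle to get \(\Fix(h_R)\cong S^1\), and count \(K_{p,q}\cap\Fix(\widehat R)\) via Lemmas~\ref{lem:no-endpoints} and~\ref{lem:fixed-points-on-knot}. Your additions (the explicit orientation-reversal argument along the knot and the persistence check under Lemma~\ref{lem:equivariant-smoothing}) are harmless supplements, though your ``second'' route to a fixed point is not independent: it still needs \(\widehat R\circ\widetilde\gamma=\widetilde\gamma\circ R\) (equivalently \(h_R\circ\gamma=\gamma\circ R\)), not merely \(\eta(z)=0\).
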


\begin{proof}
The surface map \(h_R\) is an orientation-reversing involution of \(S^2\), and
it has fixed points because \(R\) has fixed points on the knot circle. Hence
\[
        \Fix(h_R)\cong S^1.
\]
Since
\[
        \widehat{R}(x,s)=(h_R(x),-s),
\]
a point is fixed by \(\widehat{R}\) if and only if \(s=0\) and
\(x\in\Fix(h_R)\). Thus
\[
        \Fix(\widehat{R})=\Fix(h_R)\cong S^1.
\]

By Lemma~\ref{lem:no-endpoints}, the restriction of \(R\) to the knot circle
has exactly two fixed points. Lemma~\ref{lem:fixed-points-on-knot} therefore
gives
\[
        \#\bigl(K_{p,q}\cap\Fix(\widehat R)\bigr)
        =
        \#\Fix(R)
        =
        2.
\]
Thus
\[
        \Fix(\widehat R)\cong S^1,
        \qquad
        \#\bigl(K_{p,q}\cap\Fix(\widehat R)\bigr)=2.
\]
Since \(\widehat{R}\) is orientation-preserving by
Lemma~\ref{lem:thickening}, it is a strong inversion.
\end{proof}
  
\begin{lemma}\label{lem:fixed-T}
The involution \(\widehat{T}\) is a strongly negative amphichiral involution
for \(K_{p,q}\); in particular, its restriction to \(K_{p,q}\) reverses the
orientation of the knot.
\end{lemma}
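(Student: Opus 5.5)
The plan mirrors the proof of Lemma~\ref{lem:fixed-R}, with the orientation-preserving surface involution \(h_T\) in place of \(h_R\). We must show three things: \(\widehat T\) is orientation-reversing, \(\Fix(\widehat T)\cong S^0\), and \(\Fix(\widehat T)\subset K_{p,q}\). We must also show that \(\widehat T\) reverses the orientation of \(K_{p,q}\).

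\emph{Orientation behaviour.} Lemma~\ref{lem:thickening} already gives that \(\widehat T\) is an orientation-reversing involution of \(S^3\) preserving \(K_{p,q}\). By Lemma~\ref{lem:reverse-circle}, \(TS=S^{-1}T\), so the PL extension of \(T\) reverses the orientation of the Gauss circle. Since \(\widehat T\circ\widetilde\gamma=\widetilde\gamma\circ T\), it follows that \(\widehat T(K_{p,q})=-K_{p,q}\).

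\emph{The fixed set.} Since \(\widehat T(x,s)=(h_T(x),-s)\) and \(\widehat T\) swaps the suspension points \(S_\pm\), a point is fixed exactly when \(s=0\) and \(x\in\Fix(h_T)\). Thus \(\Fix(\widehat T)=\Fix(h_T)\times\{0\}\). The map \(h_T\) is a nontrivial orientation-preserving involution of \(S^2\). It is nontrivial because \(T\) moves marked endpoints; for example, it moves \(u_{j,r}\) to \(u_{-j,p-2-r}\) with \(r\neq p-2-r\). Kerékjártó's theorem, in the form of Constantin--Kolev, then gives \(\Fix(h_T)\cong S^0\), a pair of points \(\{x_1,x_2\}\).

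\emph{The main step: both fixed points lie on the knot.} By Lemma~\ref{lem:no-endpoints}, \(T\) has exactly two fixed points \(z_1,z_2\) on the knot circle, both in interiors of diagram arcs. Since \(h_T\circ\gamma=\gamma\circ T\), the points \(\gamma(z_i)\) are fixed by \(h_T\). We need them distinct. Because \(z_i\) lies in the interior of an arc, \(\gamma(z_i)\) is not a double point of \(\gamma\), so \(\gamma(z_1)=\gamma(z_2)\) forces \(z_1=z_2\), a contradiction. Hence \(\{\gamma(z_1),\gamma(z_2)\}=\Fix(h_T)\), using that \(\Fix(h_T)\) has exactly two points. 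Since \(\eta(z_i)=0\) at \(T\)-fixed points (noted in the proof of Lemma~\ref{lem:thickening}), we get \(\widetilde\gamma(z_i)=[\gamma(z_i),0]\), which is fixed by \(\widehat T\). Therefore \(\Fix(\widehat T)=\widetilde\gamma(\{z_1,z_2\})\subset K_{p,q}\), which gives \(K_{p,q}\cap\Fix(\widehat T)=\Fix(\widehat T)\cong S^0\). This is consistent with Lemma~\ref{lem:fixed-points-on-knot}.

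The only genuinely new point compared with Lemma~\ref{lem:fixed-R} is this distinctness and exhaustion argument. Its key input is the injectivity of \(\gamma\) away from crossings, combined with the exact count of two fixed points of \(h_T\).
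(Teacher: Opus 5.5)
Your proof is correct and is essentially the paper's argument: both identify \(\Fix(\widehat T)=\Fix(h_T)\times\{0\}\cong S^0\) via Ker\'ekj\'art\'o's theorem, and both observe that the two fixed points of \(T\) on the knot circle give two distinct points of \(K_{p,q}\cap\Fix(\widehat T)\), which must therefore exhaust the two-point fixed set. The differences are cosmetic. You check distinctness directly, from injectivity of \(\gamma\) away from the crossings together with \(\eta(z_i)=0\), whereas the paper cites Lemma~\ref{lem:fixed-points-on-knot} (injectivity of \(\widetilde\gamma\)); you also spell out the nontriviality of \(h_T\) and the reversal of the knot orientation, which the paper leaves implicit.
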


\begin{proof}
The surface map \(h_T\) is a nontrivial orientation-preserving involution of
\(S^2\). Hence
\[
        \Fix(h_T)\cong S^0.
\]
Since
\[
        \widehat{T}(x,s)=(h_T(x),-s),
\]
one has
\[
        \Fix(\widehat{T})=\Fix(h_T)\cong S^0.
\]

By Lemma~\ref{lem:no-endpoints}, the restriction of \(T\) to the knot circle
has exactly two fixed points. Lemma~\ref{lem:fixed-points-on-knot} therefore
gives
\[
        \#\bigl(K_{p,q}\cap\Fix(\widehat T)\bigr)
        =
        \#\Fix(T)
        =
        2.
\]
On the other hand,
\[
        \Fix(\widehat T)\cong S^0
\]
also consists of exactly two points. Since
\[
        K_{p,q}\cap\Fix(\widehat T)
        \subseteq
        \Fix(\widehat T),
\]
it follows that
\[
        K_{p,q}\cap\Fix(\widehat T)
        =
        \Fix(\widehat T).
\]
By Lemma~\ref{lem:thickening}, \(\widehat{T}\) is orientation-reversing.
Hence \(\widehat{T}\) is a strongly negative amphichiral involution for
\(K_{p,q}\).
\end{proof}

\begin{proof}[Proof of Theorem~\ref{thm:odd-klein}]
Set
\[
        \rho=\widehat{R},\qquad \tau=\widehat{T}.
\]
By Lemma~\ref{lem:thickening}, \(\rho\) and \(\tau\) are commuting ambient
involutions of \(S^3\) preserving \(K_{p,q}\), with \(\rho\)
orientation-preserving and \(\tau\) orientation-reversing. By
Lemma~\ref{lem:fixed-R}, \(\rho\) is a strong inversion. By
Lemma~\ref{lem:fixed-T}, \(\tau\) is a strongly negative amphichiral involution. Hence
\(K_{p,q}\) is Klein amphichiral.

Moreover, the subgroup generated by \(\rho\) and \(\tau\) is a Klein four
group. Indeed, \(\rho^2=\tau^2=1\) and \(\rho\tau=\tau\rho\). Both involutions
are nontrivial, and \(\rho\neq\tau\) because \(\rho\) is orientation-preserving
whereas \(\tau\) is orientation-reversing. Hence
\[
        \langle \rho,\tau\rangle
        =
        \{1,\rho,\tau,\rho\tau\}
        \cong
        \mathbb Z_2\oplus\mathbb Z_2.
\]
The preceding argument constructs the action in the PL category.
By Lemma~\ref{lem:equivariant-smoothing}, after an equivariant PL isotopy
of \(K_{p,q}\), this action is smooth with respect to a compatible smooth
structure on \(S^3\). Transporting that structure to the standard smooth
\(3\)-sphere gives commuting smooth involutions with the same orientation
and fixed-set properties. Since the equivariantly isotoped knot represents
the same knot type, we retain the notation \(K_{p,q}\), \(\rho\), and \(\tau\).
\end{proof}

The schematic Figure~\ref{fig:v4-schematic} illustrates the subgroup constructed above at the level
of the supporting-surface realization.
For related visual models of these symmetries in Turk's head knots, see
Di Prisa--\c{S}avk~\cite[Figures~2 and~8]{DPSequivariant}.

\begin{figure}[htbp]
\centering
\begin{tikzpicture}[scale=0.9, >=Latex]

\def\gap{0.16}



\draw[thick] (-2.75,1.45) -- (-2.75,2.35) -- (2.75,2.35) -- (2.75,1.45);

\draw[thick] (-2.35,1.45) -- (-2.35,2.05) -- (2.35,2.05) -- (2.35,1.45);

\draw[thick] (-1.95,1.45) -- (-1.95,1.75) -- (-\gap,1.75);
\draw[thick] (\gap,1.75) -- (1.95,1.75) -- (1.95,1.45);


\draw[thick] (-2.75,-1.45) -- (-2.75,-2.35) -- (2.75,-2.35) -- (2.75,-1.45);

\draw[thick] (-2.35,-1.45) -- (-2.35,-2.05) -- (2.35,-2.05) -- (2.35,-1.45);

\draw[thick] (-1.95,-1.45) -- (-1.95,-1.75) -- (-\gap,-1.75);
\draw[thick] (\gap,-1.75) -- (1.95,-1.75) -- (1.95,-1.45);

\foreach \y/\xout in {1.25/-3.55,1.10/-3.80,0.95/-4.05}{
  \draw[thick]
    (-3.15,\y) -- (\xout,\y) -- (\xout,-\y) -- (-3.15,-\y);
}

\foreach \y/\xout in {1.25/3.55,1.10/3.80,0.95/4.05}{
  \draw[thick]
    (3.15,\y) -- (\xout,\y) -- (\xout,-\y) -- (3.15,-\y);
}


\draw[red!75!black, thick, dashed] (0,-2.55) -- (0,2.55);

\draw[white, line width=5pt] (0,2.35-\gap) -- (0,2.35+\gap);
\draw[white, line width=5pt] (0,-2.35-\gap) -- (0,-2.35+\gap);

\draw[thick] (-0.22,2.35) -- (0.22,2.35);
\draw[thick] (-0.22,-2.35) -- (0.22,-2.35);

\fill[red!75!black] (0,2.05) circle (2.2pt);
\fill[red!75!black] (0,-2.05) circle (2.2pt);


\draw[thick, fill=white, rounded corners] (-3.15,0.75) rectangle (-1.55,1.45);
\node at (-2.35,1.10) {\(\rho(B)\)};

\draw[thick, fill=white, rounded corners] (1.55,0.75) rectangle (3.15,1.45);
\node at (2.35,1.10) {\(B\)};

\draw[thick, fill=white, rounded corners] (-3.15,-1.45) rectangle (-1.55,-0.75);
\node at (-2.35,-1.10) {\(\rho\tau(B)\)};

\draw[thick, fill=white, rounded corners] (1.55,-1.45) rectangle (3.15,-0.75);
\node at (2.35,-1.10) {\(\tau(B)\)};


\fill[blue!80!black] (-3.80,0) circle (2.8pt);
\fill[blue!80!black] (3.80,0) circle (2.8pt);


\foreach \x in {-4.55,-4.45,-4.35,-4.25}{
  \fill[black!45] (\x,0) circle (0.55pt);
}
\foreach \x in {4.25,4.35,4.45,4.55}{
  \fill[black!45] (\x,0) circle (0.55pt);
}

\draw[red!75!black, thick, ->] (1.55,2.85) -- (-1.55,2.85);
\node[red!75!black, above] at (0,2.85) {\(\rho\)};

\draw[blue!80!black, thick, ->] (4.55,1.05) -- (4.55,-1.05);
\node[blue!80!black, right] at (4.65,0) {\(\tau\)};

\draw[green!60!black, thick, ->, bend right=18]
  (1.35,0.50) to (-1.35,-0.50);
\node[green!60!black] at (0.55,0.18) {\(\rho\tau\)};

\end{tikzpicture}

\caption{
A schematic block diagram illustrating the \(V_4\)-symmetry constructed in
Theorem~\ref{thm:odd-klein}. A fundamental block \(B\) is accompanied by its
images \(\rho(B)\), \(\tau(B)\), and \(\rho\tau(B)\). The red dashed line
schematically indicates the fixed axis of the strong inversion \(\rho\), and
the two red points indicate its two intersections with the knot in this
schematic model. Near each red point, the two adjacent strands are drawn on
opposite sides of the \(\rho\)-axis in the projection, in a \(\tau\)-symmetric
way. The two blue points indicate the two points of
\(\operatorname{Fix}(\tau)\cap K\). The gray dotted pieces indicate
continuation of the strand bundles in the general case, and should not be
interpreted as part of the \(\rho\)-axis. The picture records the orbit of a
block under the commuting involutions and should not be read as a standard
planar projection of the knot.
}
\label{fig:v4-schematic}
\end{figure}

\begin{remark}\label{rem:full-symmetry-group}
Let \(\operatorname{Sym}(S^3,K)\) denote the group of isotopy classes of
self-homeomorphisms of the pair \((S^3,K)\). The construction above gives
an explicit subgroup
\[
        V_4=\langle\rho,\tau\rangle
        \leq \operatorname{Sym}(S^3,\Th(p,q)).
\]
Indeed, the four resulting mapping classes are distinct: the identity
preserves both the ambient orientation and the knot orientation; \(\rho\)
preserves the ambient orientation and reverses the knot orientation;
\(\tau\) reverses both; and \(\rho\tau\) reverses the ambient orientation
while preserving the knot orientation.

For the three-stranded family with \(\gcd(3,q)=1\), Sakuma--Weeks~\cite{SakumaWeeks1995} computed
the full symmetry group; with the convention that \(D_n\) denotes the dihedral
group of order \(2n\), one has
\[
        \operatorname{Sym}(S^3,\Th(3,q))\cong D_{2q}.
\]
For general coprime Turk's head knots, Di Prisa--\c{S}avk conjecture that
\[
        \operatorname{Sym}(S^3,\Th(p,q))\cong D_{2q}.
\]
See~\cite{DPSsurvey}. Determining the full symmetry group in general and
identifying the subgroup constructed here within it would be a natural
refinement of the present work.
\end{remark}

\section{Laurent squares and the Burau formula}\label{sec:laurent-prelim}

\subsection{Squares in Laurent polynomial rings}

Let \(R=\ZZ[t,t^{-1}]\).  We say that a Laurent polynomial \(F(t)\in R\) is a \emph{Laurent square up to unit} if
\[
        F(t)\doteqeq G(t)^2
\]
for some \(G(t)\in R\).  This is the relevant notion because the Alexander polynomial is defined only up to multiplication by \(\pm t^r\).

We shall use the following elementary observation.

\begin{lemma}\label{lem:F2-square-parity}
Let \(0\neq F(t)\in\mathbb F_2[t,t^{-1}]\).  If \(F(t)\doteqeq G(t)^2\) for some \(G(t)\in\mathbb F_2[t,t^{-1}]\), then all exponents appearing in \(F(t)\) lie in a single congruence class modulo \(2\).
\end{lemma}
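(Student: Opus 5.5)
The plan is to use the Frobenius endomorphism of \(\mathbb F_2[t,t^{-1}]\), together with the fact that units of this ring are exactly the monomials \(t^r\).

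First, unpack the hypothesis. Since \(-1=1\) in \(\mathbb F_2\), every unit of \(\mathbb F_2[t,t^{-1}]\) has the form \(t^r\) with \(r\in\ZZ\). So \(F(t)\doteqeq G(t)^2\) means \(F(t)=t^{r}G(t)^2\) for some integer \(r\). Note that \(G\neq0\), because \(F\neq0\).

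Second, apply the freshman's dream in characteristic \(2\). Write \(G(t)=\sum_{k}g_kt^k\), a finite sum with \(g_k\in\mathbb F_2\). The cross terms in \(G^2\) occur in pairs \(2g_kg_lt^{k+l}\) and therefore vanish. Hence
\[
        G(t)^2=\sum_k g_k^2t^{2k}=\sum_k g_kt^{2k},
\]
so \(G(t)^2=G(t^2)\) has only even exponents.

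Third, multiply by the unit. We get \(F(t)=\sum_k g_kt^{2k+r}\). Every exponent of \(F\) is therefore congruent to \(r\) modulo \(2\), which is the claim. The set of exponents is nonempty since \(F\neq0\), so the statement is not vacuous.

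There is no real obstacle here. The only point needing care is the identification of the units of \(\mathbb F_2[t,t^{-1}]\) as monomials, and this follows from degree considerations in a Laurent polynomial ring over a field. In the later application, the convention \(\doteqeq\) over \(\ZZ[t,t^{-1}]\) reduces modulo \(2\) to this one, because \(\pm t^r\) reduces to \(t^r\).
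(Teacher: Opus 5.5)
Your proposal is correct and follows essentially the same argument as the paper: the Frobenius identity \(G(t)^2=\sum_k g_k t^{2k}\) gives only even exponents, and multiplying by the unit \(t^r\) shifts all of them into the single class of \(r\) modulo \(2\). Your explicit remark that the units of \(\mathbb F_2[t,t^{-1}]\) are exactly the monomials \(t^r\) is the same point the paper makes when it notes that the sign is irrelevant over \(\mathbb F_2\).
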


\begin{proof}
Write
\[
        G(t)=\sum_i a_i t^i,
        \qquad a_i\in\mathbb F_2.
\]
In characteristic \(2\), the Frobenius homomorphism gives
\[
        G(t)^2=\sum_i a_i^2 t^{2i}=\sum_i a_i t^{2i}.
\]
Thus every exponent in \(G(t)^2\) is even.  Multiplication by a Laurent monomial \(t^r\) shifts all exponents by \(r\), so all exponents of \(t^rG(t)^2\) have the same parity.  The sign is irrelevant over \(\mathbb F_2\).
\end{proof}

\subsection{Reduced Burau formula}

Let \(\Bur_t:B_p\to GL_{p-1}(\ZZ[t,t^{-1}])\) denote the reduced Burau representation.  We use the standard braid-closure formula
\begin{equation}\label{eq:burau-alexander}
        \Delta_{\widehat \beta}(t)
        \doteq
        \frac{\det(I-\Bur_t(\beta))}
             {1+t+\cdots+t^{p-1}}
\end{equation}
for the closure of a \(p\)-braid \(\beta\)~\cite{Birman}. Different standard conventions for the reduced Burau representation alter the right side only by a Laurent unit; this is harmless for all square/non-square arguments below.

We shall also use
\begin{equation}\label{eq:burau-det-generator}
        \det\Bur_t(\sigma_i)=-t.
\end{equation}

\section{The Burau square quotient}\label{sec:burau-square-quotient}

In this section we prove the first main algebraic input.  Let
\[
        M_p(t)=\Bur_t(A_p).
\]

\subsection{Garside conjugacy}

Let \(\Dhalf_p\in B_p\) denote the Garside half-twist. We use the standard
relation~\cite{KasselTuraev2008}

\begin{equation}\label{eq:garside-reversal}
        \Dhalf_p\sigma_i\Dhalf_p^{-1}=\sigma_{p-i}.
\end{equation}

\begin{lemma}\label{lem:garside-A-inverse}
If \(p\) is odd, then
\[
        \Dhalf_p A_p\Dhalf_p^{-1}=A_p^{-1}.
\]
Consequently \(M_p(t)\) is conjugate over \(\ZZ[t,t^{-1}]\) to \(M_p(t)^{-1}\).
\end{lemma}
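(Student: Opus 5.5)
The plan is to check the identity as an equality of braid words, using only \eqref{eq:garside-reversal}, and then to apply the reduced Burau representation.

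Conjugation by \(\Dhalf_p\) is an automorphism of \(B_p\). So by \eqref{eq:garside-reversal} it sends the word
\[
        A_p=\prod_{i=1}^{p-1}\sigma_i^{(-1)^{i-1}}
\]
letter by letter to the word \(\prod_{i=1}^{p-1}\sigma_{p-i}^{(-1)^{i-1}}\). Reindex by \(k=p-i\), which runs from \(p-1\) down to \(1\). Since \(p\) is odd, \(p-1\) is even, so \((-1)^{i-1}=(-1)^{p-k-1}=(-1)^{k}=-(-1)^{k-1}\). Hence
\[
        \Dhalf_pA_p\Dhalf_p^{-1}
        =\sigma_{p-1}^{-(-1)^{p-2}}\,\sigma_{p-2}^{-(-1)^{p-3}}\cdots\sigma_1^{-(-1)^{0}}
        =\prod_{k=p-1}^{1}\sigma_k^{-(-1)^{k-1}}.
\]
The product runs in decreasing order of \(k\).

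Next I write out \(A_p^{-1}\). Inverting a product reverses the order of the letters and inverts each one. This gives exactly \(\prod_{k=p-1}^{1}\sigma_k^{-(-1)^{k-1}}\), which is the same word as above: both begin \(\sigma_{p-1}\sigma_{p-2}^{-1}\sigma_{p-3}\cdots\) and end with \(\sigma_1^{-1}\). So the two braids agree already as words, and no braid relations beyond \eqref{eq:garside-reversal} are needed. The parity bookkeeping is the only point needing care. In particular, the oddness of \(p\) is what makes the two sign patterns match: for even \(p\) the conjugate would have the same leading sign as \(A_p\), and the identity would fail.

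For the consequence, I apply the homomorphism \(\Bur_t\). Setting \(P=\Bur_t(\Dhalf_p)\), which lies in \(GL_{p-1}(\ZZ[t,t^{-1}])\) because \(\Dhalf_p\) is invertible in \(B_p\), we get
\[
        P\,M_p(t)\,P^{-1}=\Bur_t(A_p^{-1})=M_p(t)^{-1}.
\]
Thus \(M_p(t)\) and \(M_p(t)^{-1}\) are conjugate over \(\ZZ[t,t^{-1}]\). I expect no genuine obstacle in this argument; the only step requiring attention is the sign reindexing in the first display.
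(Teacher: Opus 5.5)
Your proof is correct and follows the paper's argument: conjugate \(A_p\) letter by letter using \(\Dhalf_p\sigma_i\Dhalf_p^{-1}=\sigma_{p-i}\), check that the oddness of \(p\) makes the resulting word \(\sigma_{p-1}\sigma_{p-2}^{-1}\cdots\sigma_2\sigma_1^{-1}\) coincide with \(A_p^{-1}\), and then apply \(\Bur_t\) with \(P=\Bur_t(\Dhalf_p)\) to conclude. Your explicit exponent reindexing \((-1)^{i-1}=-(-1)^{k-1}\) is simply a more detailed version of the paper's word comparison.
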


\begin{proof}
Using \eqref{eq:garside-reversal}, we compute
\[
\begin{aligned}
        \Dhalf_p A_p\Dhalf_p^{-1}
        & = \sigma_{p-1}\sigma_{p-2}^{-1}\sigma_{p-3}\cdots
            \sigma_2\sigma_1^{-1}.
\end{aligned}
\]
Since \(p\) is odd, \(p-1\) is even, and
\[
        A_p=\sigma_1\sigma_2^{-1}\sigma_3\sigma_4^{-1}\cdots
        \sigma_{p-2}\sigma_{p-1}^{-1}.
\]
Therefore
\[
\begin{aligned}
        A_p^{-1}
        &= (\sigma_{p-1}^{-1})^{-1}(\sigma_{p-2})^{-1}
           \cdots (\sigma_2^{-1})^{-1}\sigma_1^{-1}       \\
        &= \sigma_{p-1}\sigma_{p-2}^{-1}\sigma_{p-3}\cdots
            \sigma_2\sigma_1^{-1}.
\end{aligned}
\]
Thus \(\Dhalf_pA_p\Dhalf_p^{-1}=A_p^{-1}\).  Applying the reduced Burau representation gives
\[
        \Bur_t(\Dhalf_p)M_p(t)\Bur_t(\Dhalf_p)^{-1}=M_p(t)^{-1}.
\]
\end{proof}

\begin{lemma}\label{lem:reciprocal-charpoly}
Let \(p\geq 3\) be odd and set \(p-1=2g\).  Then the characteristic polynomial
\[
        \chi_p(x)=\det(xI-M_p(t))
\]
is reciprocal:
\[
        \chi_p(x)=x^{2g}\chi_p(x^{-1}).
\]
\end{lemma}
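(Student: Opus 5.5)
We obtain the reciprocity directly from Lemma~\ref{lem:garside-A-inverse} together with a determinant computation. By that lemma, \(M=M_p(t)\) is conjugate over \(\ZZ[t,t^{-1}]\) to \(M^{-1}\). Conjugate matrices have equal characteristic polynomials, so
\[
        \chi_p(x)=\det(xI-M)=\det(xI-M^{-1}).
\]
Working over the fraction field \(\QQ(t)\), or simply as an identity in \(\ZZ[t,t^{-1}][x,x^{-1}]\), we factor
\[
        xI-M^{-1}=-xM^{-1}\bigl(x^{-1}I-M\bigr),
\]
which gives
\[
        \det(xI-M^{-1})=(-x)^{2g}\det(M)^{-1}\det(x^{-1}I-M)
        =x^{2g}\det(M)^{-1}\chi_p(x^{-1}),
\]
since the matrix size \(p-1=2g\) is even.

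It remains to show \(\det M=1\). By \eqref{eq:burau-det-generator}, \(\det\Bur_t(\sigma_i^{\pm1})=(-t)^{\pm1}\). The word \(A_p\) contains \(g\) positive letters \(\sigma_1,\sigma_3,\ldots,\sigma_{p-2}\) and \(g\) negative letters \(\sigma_2^{-1},\ldots,\sigma_{p-1}^{-1}\). Multiplicativity of the determinant therefore gives
\[
        \det M_p(t)=(-t)^{g}(-t)^{-g}=1.
\]
Substituting this into the previous identity yields \(\chi_p(x)=x^{2g}\chi_p(x^{-1})\), as claimed.

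There is no real obstacle. The only points needing care are that the conjugating matrix \(\Bur_t(\Dhalf_p)\) is invertible over \(\ZZ[t,t^{-1}]\) (it is, being the image of a group element), and that the identity is one of polynomials in \(x\) with Laurent coefficients in \(t\). The latter can be verified by clearing the \(x^{-2g}\) factor, i.e.\ by checking \(x^{2g}\det(x^{-1}I-M)\in\ZZ[t^{\pm1}][x]\). Both are routine.
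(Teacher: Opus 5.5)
Your proposal is correct and follows essentially the same route as the paper. Both use the Garside conjugacy of \(M_p(t)\) with \(M_p(t)^{-1}\), get \(\det M_p(t)=1\) from the zero exponent sum of \(A_p\), and finish with an even-dimensional determinant identity. The only cosmetic difference is in that last step: you factor \(xI-M^{-1}=-xM^{-1}(x^{-1}I-M)\), whereas the paper writes \(\det(M^{-1})\det(xM-I)=\det(I-xM)\).
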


\begin{proof}
By Lemma~\ref{lem:garside-A-inverse}, \(M_p(t)\) is conjugate to \(M_p(t)^{-1}\).  Hence
\[
        \chi_p(x)=\chi_{M_p}(x) = \chi_{M_p^{-1}}(x).
\]
The exponent sum of \(A_p\) is
\[
        1-1+1-1+\cdots+1-1=0,
\]
so by \eqref{eq:burau-det-generator},
\[
        \det M_p(t)=1.
\]
Since \(2g=p-1\) is even,
\[
\begin{aligned}
        \chi_{M_p^{-1}}(x)
        &=\det(xI-M_p^{-1})                                      \\
        &=\det(M_p^{-1})\det(xM_p-I)                              \\
        &=\det(xM_p-I)                                            \\
        &=\det(I-xM_p).
\end{aligned}
\]
On the other hand,
\[
\begin{aligned}
        x^{2g}\chi_p(x^{-1})
        &=x^{2g}\det(x^{-1}I-M_p) \\
        &=\det(I-xM_p).
\end{aligned}
\]
Thus \(\chi_{M_p^{-1}}(x)=x^{2g}\chi_p(x^{-1})\), and the claim follows.
\end{proof}

\subsection{The Chebyshev-resultant square}

Let
\[
        Q_m(X)=1+X+\cdots+X^{m-1}.
\]
We shall prove that \(\det Q_m(M_p(t)^2)\) is a Laurent square.

We shall use the following standard facts about reciprocal polynomials and
resultants; see, for example, Lang~\cite{Lang2002}.
If
\[
        \chi(x)=x^{2g}+a_1x^{2g-1}+\cdots+a_gx^g+\cdots+a_1x+1
\]
is monic reciprocal of degree \(2g\), then there is a unique monic polynomial \(\Psi(z)\) of degree \(g\) such that
\begin{equation}\label{eq:Psi-definition}
        \chi(x)=x^g\Psi(x+x^{-1}).
\end{equation}
This follows because the Laurent polynomials
\[
        x^k+x^{-k}\qquad (0\leq k\leq g)
\]
are integral polynomials in \(x+x^{-1}\).

Let \(S_r(z)\in\ZZ[z]\) be defined by
\begin{equation}\label{eq:S-recursion}
        S_0(z)=1,
        \qquad S_1(z)=z,
        \qquad S_{r+1}(z)=zS_r(z)-S_{r-1}(z).
\end{equation}
These are the Chebyshev polynomials of the second kind in the normalization
\begin{equation}\label{eq:S-lambda}
        S_{m-1}(\lambda+\lambda^{-1})
        =\frac{\lambda^m-\lambda^{-m}}{\lambda-\lambda^{-1}}.
\end{equation}
The identity is a polynomial identity in \(\lambda\) after clearing denominators, so it remains valid at \(\lambda=\pm 1\) by continuity or by the recursion.

\begin{proposition}\label{prop:quotient-square}
Let \(p\geq 3\) be odd and \(m\geq 1\).  Then
\[
        \det Q_m(M_p(t)^2)
\]
is a square in \(\ZZ[t,t^{-1}]\).
\end{proposition}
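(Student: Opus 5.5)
The plan is to diagonalize the problem over an algebraic closure and then pair eigenvalues using the reciprocity of Lemma~\ref{lem:reciprocal-charpoly}. Work over an algebraic closure \(\overline{K}\) of \(K=\QQ(t)\). Write \(p-1=2g\). By Lemma~\ref{lem:reciprocal-charpoly}, \(\chi_p(x)\) is monic and reciprocal of degree \(2g\), with coefficients in \(\ZZ[t,t^{-1}]\). Its constant term is \(\det M_p(t)=1\), as computed in the proof of that lemma.

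\textbf{Step 1: factor \(\chi_p\) through \(\Psi\).} By \eqref{eq:Psi-definition} there is a monic \(\Psi(z)\) of degree \(g\) with \(\chi_p(x)=x^g\Psi(x+x^{-1})\). The coefficients of \(\Psi\) lie in \(\ZZ[t,t^{-1}]\), because expressing \(x^k+x^{-k}\) as an integral polynomial in \(x+x^{-1}\) introduces only integer coefficients. Let \(z_1,\dots,z_g\in\overline K\) be the roots of \(\Psi\), listed with multiplicity. Then
\[
        \chi_p(x)=\prod_{i=1}^g (x^2-z_ix+1).
\]
For each \(i\), choose a root \(\lambda_i\) of \(x^2-z_ix+1\). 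The other root is \(\lambda_i^{-1}\), and \(z_i=\lambda_i+\lambda_i^{-1}\). This pairs the eigenvalues of \(M_p(t)\) (with multiplicity) as \(\{\lambda_i,\lambda_i^{-1}\}\), and it does so even when \(\lambda_i=\pm1\). Using the factorization of \(\chi_p\), rather than pairing eigenvalues directly, is exactly what avoids any fuss about self-inverse eigenvalues or repeated roots.

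\textbf{Step 2: spectral mapping.} Triangularize \(M_p(t)\) over \(\overline K\). Then \(Q_m(M_p^2)\) is triangular with diagonal entries \(Q_m(\mu^2)\), where \(\mu\) runs over the eigenvalues. Hence
\[
        \det Q_m(M_p^2)=\prod_{i=1}^g Q_m(\lambda_i^2)\,Q_m(\lambda_i^{-2}).
\]

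\textbf{Step 3: the Chebyshev identity.} Clearing denominators in \eqref{eq:S-lambda} gives the polynomial identity
\[
        Q_m(\lambda^2)=\frac{\lambda^{2m}-1}{\lambda^2-1}=\lambda^{m-1}S_{m-1}(\lambda+\lambda^{-1}),
\]
which holds for every \(\lambda\neq 0\), including \(\lambda=\pm1\). Replacing \(\lambda\) by \(\lambda^{-1}\) gives \(Q_m(\lambda^{-2})=\lambda^{-(m-1)}S_{m-1}(\lambda+\lambda^{-1})\). Multiplying, each pair contributes \(S_{m-1}(z_i)^2\), so
\[
        \det Q_m(M_p^2)=\Bigl(\prod_{i=1}^g S_{m-1}(z_i)\Bigr)^2 .
\]

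\textbf{Step 4: integrality.} This is the only step needing care. The product \(\prod_i S_{m-1}(z_i)\) is symmetric in the roots of the monic polynomial \(\Psi\). It therefore equals \(\Res(\Psi,S_{m-1})\) (up to sign convention), which is a polynomial with integer coefficients in the coefficients of \(\Psi\). Alternatively, one can see this as the determinant of \(S_{m-1}\) applied to the companion matrix of \(\Psi\). Since \(\Psi\) is monic with coefficients in \(\ZZ[t,t^{-1}]\), this product lies in \(\ZZ[t,t^{-1}]\). Hence \(\det Q_m(M_p(t)^2)\) is the square of an element of \(\ZZ[t,t^{-1}]\).
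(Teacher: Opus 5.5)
Your proof is correct and follows the paper's argument essentially step for step. You use the reciprocity of \(\chi_p\), the reduction \(\chi_p(x)=x^g\Psi_p(x+x^{-1})\), the pairing identity \(Q_m(\lambda^2)Q_m(\lambda^{-2})=S_{m-1}(\lambda+\lambda^{-1})^2\), and integrality via \(\Res_z(\Psi_p,S_{m-1})\), exactly as the paper does. Your factorization \(\chi_p(x)=\prod_i(x^2-z_ix+1)\) and the single-factor identity \(Q_m(\lambda^2)=\lambda^{m-1}S_{m-1}(\lambda+\lambda^{-1})\) make explicit the eigenvalue pairing (with multiplicity, including \(\lambda=\pm1\)) that the paper leaves implicit.
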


\begin{proof}
Set \(p-1=2g\).  By Lemma~\ref{lem:reciprocal-charpoly}, \(\chi_p(x)\) is monic reciprocal of degree \(2g\).  Let \(\Psi_p(z)\in\ZZ[t,t^{-1}][z]\) be the monic polynomial satisfying
\[
        \chi_p(x)=x^g\Psi_p(x+x^{-1}).
\]
Over an algebraic closure of \(\QQ(t)\), write the roots of \(\chi_p\) as
\[
        \lambda_1,\lambda_1^{-1},\ldots,\lambda_g,\lambda_g^{-1}.
\]
Then the roots of \(\Psi_p\) are
\[
        z_i=\lambda_i+\lambda_i^{-1},
        \qquad 1\leq i\leq g,
\]
with multiplicity.

Now
\[
\begin{aligned}
        \det Q_m(M_p^2)
        &=\prod_{i=1}^g Q_m(\lambda_i^2)Q_m(\lambda_i^{-2}).
\end{aligned}
\]
For an indeterminate \(\lambda\), one has
\begin{equation}\label{eq:Q-pair-square}
        Q_m(\lambda^2)Q_m(\lambda^{-2})
        =S_{m-1}(\lambda+\lambda^{-1})^2.
\end{equation}
Indeed, for \(\lambda^2\neq 1\),
\[
        Q_m(\lambda^2)=\frac{\lambda^{2m}-1}{\lambda^2-1},
\]
and a direct simplification gives
\[
        Q_m(\lambda^2)Q_m(\lambda^{-2})
        =\left(\frac{\lambda^m-\lambda^{-m}}
                    {\lambda-\lambda^{-1}}\right)^2.
\]
Together with \eqref{eq:S-lambda}, this proves \eqref{eq:Q-pair-square}; the exceptional values \(\lambda=\pm1\) follow because both sides are Laurent polynomial identities.

Thus
\[
\begin{aligned}
        \det Q_m(M_p^2)
        &=\prod_{i=1}^g S_{m-1}(z_i)^2 \\
        &=\left(\prod_{i=1}^g S_{m-1}(z_i)\right)^2.
\end{aligned}
\]
Since \(\Psi_p\) is monic,
\[
        \prod_{i=1}^g S_{m-1}(z_i)
        =\Res_z(\Psi_p(z),S_{m-1}(z)),
\]
up to the standard sign convention for resultants.  The sign is immaterial after squaring.  The resultant belongs to \(\ZZ[t,t^{-1}]\).  Hence
\[
        \det Q_m(M_p^2)
        =\Res_z(\Psi_p,S_{m-1})^2
\]
is a Laurent square.
\end{proof}

\begin{theorem}\label{thm:square-quotient}
Let \(p\geq 3\) be odd and \(m\geq 1\).  Then there exists \(H_{p,m}(t)\in\ZZ[t,t^{-1}]\) such that
\[
        \Delta_{\Th(p,2m)}(t)
        \doteq
        \Delta_{\Th(p,2)}(t)H_{p,m}(t)^2.
\]
\end{theorem}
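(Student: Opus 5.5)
We apply the Burau closure formula \eqref{eq:burau-alexander} to \(\beta=A_p^{2m}\) and factor the numerator through the polynomial identity
\[
        I-X^{2m}=(I-X^2)\,Q_m(X^2),
\]
which holds in any commutative matrix subring, in particular for \(X=M_p(t)\). Taking determinants gives
\[
        \det\bigl(I-M_p(t)^{2m}\bigr)
        =\det\bigl(I-M_p(t)^2\bigr)\cdot\det Q_m\bigl(M_p(t)^2\bigr).
\]

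Dividing by \(1+t+\cdots+t^{p-1}\) and using \eqref{eq:burau-alexander} for \(A_p^2\) and \(A_p^{2m}\) yields
\[
        \Delta_{\Th(p,2m)}(t)\doteq \Delta_{\Th(p,2)}(t)\cdot \det Q_m\bigl(M_p(t)^2\bigr).
\]
Proposition~\ref{prop:quotient-square} identifies the second factor as \(H_{p,m}(t)^2\), where \(H_{p,m}=\Res_z(\Psi_p,S_{m-1})\in\ZZ[t,t^{-1}]\). This gives the theorem.

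The hypotheses of \eqref{eq:burau-alexander} need some care, since \(\Th(p,2)\) is a \(2\)-component link when \(p\) is odd. Here \(\Delta_{\Th(p,2)}(t)\) should be read as the quotient \(\det(I-M_p^2)/(1+\cdots+t^{p-1})\), which is the one-variable Alexander polynomial of the closure in the convention of \cite{Birman}. The main point to check is therefore that this quotient is a genuine Laurent polynomial, i.e.\ that the cyclotomic factor \(1+t+\cdots+t^{p-1}\) divides \(\det(I-M_p^2)\) rather than \(\det Q_m(M_p^2)\).

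The cleanest route is to avoid needing that divisibility for the conclusion. Set \(D(t)=1+t+\cdots+t^{p-1}\). We have \(D\cdot\Delta_{\Th(p,2m)}\doteq\det(I-M_p^2)\,H^2\), and \(\det(I-M_p^2)=D\cdot\Delta_{\Th(p,2)}\) holds by the same closure formula applied to the braid \(A_p^2\), which remains valid for links in Birman's formulation. Cancelling \(D\) in the domain \(\ZZ[t,t^{-1}]\) then gives the claimed identity.

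If one prefers not to invoke the link version of the formula, the fallback is to argue directly. Evaluate the unreduced Burau representation: it has the reduced one as a quotient together with a trivial summand. Since \(\det(I-\Bur_t(\beta))\cdot(1-t)\) computes \((1-t^p)\,\Delta\) for any closed braid, divisibility follows. Everything else is the routine determinant bookkeeping above, and the units \(\pm t^r\) are harmless under \(\doteq\).
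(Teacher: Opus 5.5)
Your argument is the paper's proof: factor $I-M_p^{2m}=(I-M_p^2)\,Q_m(M_p^2)$, apply \eqref{eq:burau-alexander} to $A_p^{2m}$ and $A_p^2$ with the common denominator $1+t+\cdots+t^{p-1}$, cancel, and invoke Proposition~\ref{prop:quotient-square}. Your side discussion rests on a false premise, though: for odd $p$ the closure $\Th(p,2)$ is a knot, not a link (the permutation of $A_p$ is a $p$-cycle, and its square is again a $p$-cycle since $\gcd(p,2)=1$; e.g.\ $\Th(3,2)$ is the figure-eight knot), so your divisibility worry is vacuous and the unreduced-Burau fallback, which only restates \eqref{eq:burau-alexander}, is unnecessary---it is $\Th(p,2m)$ that becomes a link when $\gcd(p,m)>1$, and there the link form of the closure formula is what is implicitly used.
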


\begin{proof}
Using \eqref{eq:burau-alexander} for \(\beta=A_p^{2m}\), we get
\[
        \Delta_{\Th(p,2m)}(t)
        \doteq
        \frac{\det(I-M_p^{2m})}{1+t+\cdots+t^{p-1}}.
\]
Similarly,
\[
        \Delta_{\Th(p,2)}(t)
        \doteq
        \frac{\det(I-M_p^2)}{1+t+\cdots+t^{p-1}}.
\]
The matrix identity
\[
        I-M_p^{2m}
        =
        (I-M_p^2)Q_m(M_p^2)
\]
gives
\[
        \det(I-M_p^{2m})
        =
        \det(I-M_p^2)\det Q_m(M_p^2).
\]
Applying the Burau--Alexander formula to \(A_p^{2m}\) and \(A_p^2\),
with the same denominator \(1+t+\cdots+t^{p-1}\), yields
\[
        \Delta_{\Th(p,2m)}(t)
        \doteq
        \Delta_{\Th(p,2)}(t)\det Q_m(M_p(t)^2).
\]
By Proposition~\ref{prop:quotient-square},
\[
        \det Q_m(M_p(t)^2)=H_{p,m}(t)^2
\]
for some \(H_{p,m}(t)\in\ZZ[t,t^{-1}]\), proving the theorem.
\end{proof}

\begin{remark}\label{rem:det-shadow}
Specializing Theorem~\ref{thm:square-quotient} at \(t=-1\) explains the computational pattern
\[
        \det(\Th(p,2m))=\det(\Th(p,2))\cdot s_{p,m}^2,
\]
where \(s_{p,m}=|H_{p,m}(-1)|\).
Since \(\det(\Th(p,2))\) is the \(p\)-th Pell number, this is the determinant shadow of the stronger Alexander-polynomial factorization.
\end{remark}

\section{The base case \texorpdfstring{\(q=2\)}{q=2} modulo 2}\label{sec:q2-mod2}

We now prove that \(\Delta_{\Th(p,2)}(t)\) is not a Laurent square.

The survey of Di Prisa--\c{S}avk records, following Kr\"otenheerdt and Takemura, a Seifert matrix \(V_{p,q}\) for the canonical fiber surface of \(\Th(p,q)\) \cite{Kroetenheerdt,Takemura} (see also \cite[Section~4.6]{DPSsurvey}).  The matrix is written in blocks of size \((q-1)\times(q-1)\), where \(A_q\) is the Seifert matrix of the torus link \(T(2,q)\).  For \(p\) odd, the diagonal blocks alternate between \(A_q\) and \(-A_q^T\), while the first subdiagonal blocks are \(-A_q\), and all other blocks are zero.

For \(q=2\), the block size is \(1\) and
\[
        A_2=(-1).
\]
Consequently, modulo \(2\), all signs disappear and
\[
        V_{p,2}\equiv L_{p-1}\pmod 2,
\]
where \(L_n\) denotes the \(n\times n\) lower bidiagonal matrix
\[
        L_n=
        \begin{pmatrix}
        1 & 0 & 0 & \cdots & 0\\
        1 & 1 & 0 & \cdots & 0\\
        0 & 1 & 1 & \cdots & 0\\
        \vdots & \ddots & \ddots & \ddots & \vdots\\
        0 & \cdots & 0 & 1 & 1
        \end{pmatrix}.
\]

\begin{remark}\label{rem:seifert-convention}
Some references use \(\det(tV-V^T)\) and others use \(\det(V-tV^T)\) for the Alexander polynomial \cite{Lickorish1997}.  These differ by a Laurent unit after normalization.  In the modulo \(2\) argument below, a transpose convention replaces \(L_n+tL_n^T\) by \(tL_n+L_n^T\), which gives the same conclusion up to multiplication by a monomial.  Thus the square obstruction is independent of these conventions.
\end{remark}

\begin{proposition}\label{prop:q2-mod2}
Let \(p\geq 3\) be odd.  Then, up to multiplication by a Laurent monomial,
\[
        \Delta_{\Th(p,2)}(t)
        \equiv
        1+t+t^2+\cdots+t^{p-1}
        \pmod 2.
\]
\end{proposition}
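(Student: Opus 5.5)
We plan to compute \(\det(V_{p,2}-tV_{p,2}^T)\) modulo \(2\) directly, using the reduction \(V_{p,2}\equiv L_{p-1}\pmod 2\) recorded just above. Reduction modulo \(2\) commutes with taking determinants, and by Remark~\ref{rem:seifert-convention} the choice of Seifert convention only changes the answer by a Laurent monomial. It therefore suffices to show
\[
        D_n(t):=\det\bigl(L_n+tL_n^T\bigr)\equiv 1+t+\cdots+t^{n}\pmod 2,
        \qquad n=p-1.
\]
Over \(\mathbb F_2\) signs are irrelevant, so \(L_n-tL_n^T\) and \(L_n+tL_n^T\) coincide.

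The matrix \(N_n=L_n+tL_n^T\) is tridiagonal. Its diagonal entries are \(1+t\), its subdiagonal entries are \(1\), and its superdiagonal entries are \(t\). We expand along the last row to get the standard continuant recursion
\[
        D_n=(1+t)D_{n-1}-t\,D_{n-2},
\]
with \(D_0=1\) and \(D_1=1+t\).

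We then check by induction that \(P_n:=1+t+\cdots+t^n\) satisfies the same recursion over \(\ZZ\):
\[
        (1+t)P_{n-1}-tP_{n-2}=P_{n-1}+t(P_{n-1}-P_{n-2})=P_{n-1}+t\cdot t^{n-1}=P_n .
\]
The initial values also agree, since \(P_0=1\) and \(P_1=1+t\). So \(D_n=P_n\), in fact already over \(\ZZ\) for this model matrix, and in particular modulo \(2\). Taking \(n=p-1\) gives \(\Delta_{\Th(p,2)}(t)\equiv 1+t+\cdots+t^{p-1}\) up to a monomial.

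The determinant computation is routine. The main point needing care is the input itself: that the Kr\"otenheerdt--Takemura Seifert matrix for \(q=2\) really is, entrywise modulo \(2\), the bidiagonal matrix \(L_{p-1}\). Concretely, we must check three things. The \(1\times1\) blocks \(A_2\) and \(-A_2^T\) are odd. The subdiagonal blocks \(-A_2\) are odd. All other entries vanish. We also need that the matrix has size \(2g=p-1\), consistent with the Burau degree. If we have any doubt about that citation, a cross-check is available from \eqref{eq:burau-alexander}. Reducing \(M_p(t)\) modulo \(2\), one can test small cases such as \(p=3\), where \(\Th(3,2)\) is the figure-eight knot with \(\Delta\equiv 1+t+t^2\). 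This confirms that the normalization is correct before relying on the general recursion.
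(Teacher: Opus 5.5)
Your proposal is correct and is essentially the paper's proof: you reduce the Kr\"otenheerdt--Takemura Seifert matrix modulo \(2\) to \(L_{p-1}\), then identify \(\det(L_n+tL_n^T)\) with \(1+t+\cdots+t^n\) via the tridiagonal recurrence. The only cosmetic difference is that you run the recurrence \(D_n=(1+t)D_{n-1}-tD_{n-2}\) over \(\ZZ\), where the identity already holds for this model matrix, rather than in characteristic \(2\) as the paper does.
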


\begin{proof}
Set \(n=p-1\). Using
\(\Delta_K(t)\doteq\det(V-tV^T)\) and reducing modulo \(2\), we obtain,
up to multiplication by a Laurent monomial,
\[
        \Delta_{\Th(p,2)}(t)
        \equiv
        \det(L_n+tL_n^T)
        \pmod 2.
\]
The matrix \(L_n+tL_n^T\) is the tridiagonal matrix
\[
        \begin{pmatrix}
        1+t & t & 0 & \cdots & 0\\
        1 & 1+t & t & \cdots & 0\\
        0 & 1 & 1+t & \ddots & 0\\
        \vdots & \ddots & \ddots & \ddots & t\\
        0 & \cdots & 0 & 1 & 1+t
        \end{pmatrix}
\]
over \(\mathbb F_2[t]\).  Let
\[
        D_n(t)=\det(L_n+tL_n^T).
\]
Then
\[
        D_0(t)=1,
        \qquad D_1(t)=1+t,
\]
and the tridiagonal determinant recurrence gives
\[
        D_n(t)=(1+t)D_{n-1}(t)+tD_{n-2}(t)
\]
over \(\mathbb F_2[t]\).

We claim that
\[
        D_n(t)=1+t+\cdots+t^n.
\]
Let \(S_n(t)=1+t+\cdots+t^n\).  The initial conditions agree.  Moreover, over \(\mathbb F_2[t]\),
\[
\begin{aligned}
        (1+t)S_{n-1}(t)+tS_{n-2}(t)
        &=S_{n-1}(t)+tS_{n-1}(t)+tS_{n-2}(t) \\
        &=1+t+\cdots+t^n
        =S_n(t),
\end{aligned}
\]
because the middle terms cancel in characteristic \(2\).  Thus \(D_n(t)=S_n(t)\), and the claim follows with \(n=p-1\).
\end{proof}

\begin{corollary}\label{cor:q2-not-square}
For every odd \(p\geq 3\), \(\Delta_{\Th(p,2)}(t)\) is not a Laurent square up to unit in \(\ZZ[t,t^{-1}]\).
\end{corollary}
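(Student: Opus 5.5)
We argue by contradiction, reducing modulo \(2\) and applying Lemma~\ref{lem:F2-square-parity}. Suppose that
\[
        \Delta_{\Th(p,2)}(t)\doteq G(t)^2
\]
for some \(G(t)\in\ZZ[t,t^{-1}]\), where \(\doteq\) allows multiplication by \(\pm t^r\). Reducing coefficients modulo \(2\) is a ring homomorphism \(\ZZ[t,t^{-1}]\to\mathbb F_2[t,t^{-1}]\). It sends units \(\pm t^r\) to units \(t^r\), so the reductions satisfy \(\overline{\Delta}(t)\doteq\overline G(t)^2\) in \(\mathbb F_2[t,t^{-1}]\).

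By Proposition~\ref{prop:q2-mod2}, up to a Laurent monomial,
\[
        \overline{\Delta}(t)=1+t+\cdots+t^{p-1}.
\]
This reduction is nonzero, so Lemma~\ref{lem:F2-square-parity} applies. It forces all exponents of \(\overline\Delta\) to lie in a single congruence class modulo \(2\). Multiplying by a monomial shifts every exponent by the same amount, so this property is unaffected by the normalization.

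However, \(1+t+\cdots+t^{p-1}\) contains both \(t^0\) and \(t^1\), since \(p\geq3\) gives \(p-1\geq 2\geq 1\). These exponents have different parities, which is a contradiction. Hence \(\Delta_{\Th(p,2)}(t)\) is not a Laurent square up to unit.

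There is no real obstacle here; the content is in Proposition~\ref{prop:q2-mod2}. The only points to check are that the unit ambiguity survives reduction modulo \(2\), and that the mod-\(2\) image of \(G\) is a legitimate element of \(\mathbb F_2[t,t^{-1}]\). Both hold because reduction is a ring map. Remark~\ref{rem:seifert-convention} covers the Seifert-matrix convention, which again changes the result only by a monomial.
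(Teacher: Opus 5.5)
Your proof is correct and follows essentially the same argument as the paper. You reduce a putative factorization \(\pm t^aG(t)^2\) modulo \(2\), use Proposition~\ref{prop:q2-mod2} to identify the reduction with a monomial times \(1+t+\cdots+t^{p-1}\), and contradict Lemma~\ref{lem:F2-square-parity}, since that polynomial has exponents of both parities.
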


\begin{proof}
Suppose, for contradiction, that
\[
        \Delta_{\Th(p,2)}(t)
        =
        \varepsilon t^aG(t)^2,
        \qquad
        \varepsilon\in\{\pm1\},
        \quad
        G(t)\in\ZZ[t,t^{-1}].
\]
By Proposition~\ref{prop:q2-mod2}, there is an integer \(b\) such that
\[
        \overline{\Delta}_{\Th(p,2)}(t)
        =
        t^b(1+t+\cdots+t^{p-1})
\]
in \(\mathbb F_2[t,t^{-1}]\). Reducing the assumed square
factorization modulo \(2\) therefore gives
\[
        t^{b-a}(1+t+\cdots+t^{p-1})
        =
        \overline G(t)^2.
\]
The exponents occurring on the left are consecutive and hence include
both parities, whereas every Laurent square over \(\mathbb F_2\), up
to multiplication by a monomial, has all its exponents in one parity
class by Lemma~\ref{lem:F2-square-parity}. This is a contradiction.
\end{proof}

\section{Failure of equivariant rational sliceness}\label{sec:even-failure}

\begin{proof}[Proof of Theorem~\ref{thm:even-obstruction}]
Let \(q=2m\).  By Theorem~\ref{thm:square-quotient},
\[
        \Delta_{\Th(p,2m)}(t)
        \doteq
        \Delta_{\Th(p,2)}(t)H_{p,m}(t)^2
\]
for some \(H_{p,m}(t)\in\ZZ[t,t^{-1}]\).  By
Corollary~\ref{cor:q2-not-square}, \(\Delta_{\Th(p,2)}(t)\) is not a Laurent
square.

Since Alexander polynomials of knots are nonzero, the factorization above
implies that \(H_{p,m}(t)\neq0\). Because
\(\ZZ[t,t^{-1}]\) is a unique factorization domain, choose an irreducible
factor whose exponent in \(\Delta_{\Th(p,2)}(t)\) is odd. Its exponent in
\[
        \Delta_{\Th(p,2)}(t)H_{p,m}(t)^2
\]
is the sum of that odd exponent and an even integer, and hence remains odd.
Therefore \(\Delta_{\Th(p,2m)}(t)\) is not a Laurent square.

Now suppose \(\Th(p,2m)\) were equivariantly \(\QQ\)-slice with respect to some
strong inversion.  Di Prisa--\c{S}avk's equivariant Fox--Milnor obstruction
would imply that \(\Delta_{\Th(p,2m)}(t)\) is a Laurent square
\cite{DPSequivariant}, contradicting the previous paragraph.  Thus
\(\Th(p,2m)\) is not equivariantly \(\QQ\)-slice with respect to any strong
inversion.
\end{proof}

\begin{proof}[Proof of Corollary~\ref{cor:even-not-klein}]
If \(\Th(p,q)\) were Klein amphichiral, then Di Prisa--\c{S}avk's theorem that
Klein amphichiral knots are equivariantly \(\QQ\)-slice would imply equivariant
\(\QQ\)-sliceness for the corresponding strong inversion \cite{DPSequivariant}.
This contradicts Theorem~\ref{thm:even-obstruction}.  Hence \(\Th(p,q)\) is not Klein
amphichiral.
\end{proof}

\begin{proof}[Proof of Theorem~\ref{thm:parity-dichotomy}]
If \(q\) is odd, then \(q\geq3\), and the assertion follows from
Theorem~\ref{thm:odd-klein} and Corollary~\ref{cor:equivariant-Q-slice}. If
\(q\) is even, then the assertion follows from Theorem~\ref{thm:even-obstruction}
and Corollary~\ref{cor:even-not-klein}.
\end{proof}

\section{Examples and computational checks}\label{sec:examples}

The theorem explains the determinant data obtained experimentally.  For odd \(p\) and even \(q=2m\), Theorem~\ref{thm:square-quotient} implies
\[
        \det(\Th(p,2m))
        =
        \det(\Th(p,2))\,s_{p,m}^2.
\]
Dowdall--Mattman--Meek--Solis computed
\[
        \det(\Th(p,2))=P_p,
\]
where \(P_p\) is the \(p\)-th Pell number; this formula is also recorded in the Di Prisa--\c{S}avk survey \cite{DPSsurvey,DMMS}.  Thus the squarefree part of \(\det(\Th(p,2m))\) is the squarefree part of \(P_p\).

For example, computations give
\[
\begin{array}{c|c|c|c}
(p,q) & \det(\Th(p,q)) & \sqfree\!\bigl(\det(\Th(p,q))\bigr) & \det(\Th(p,q))/P_p \\
\hline
(5,4)  & 3509              & 29   & 11^2 \\
(7,4)  & 284089            & 1    & 41^2 \\
(9,4)  & 23057865          & 985  & 153^2 \\
(11,4) & 1871801381        & 5741 & 571^2
\end{array}
\]
The case \(p=7\) is instructive: the determinant is a square, but the Alexander polynomial is not.  Indeed,
\[
        \Delta_{\Th(7,2)}(t)
        \doteq
        (t^3-6t^2+5t-1)(t^3-5t^2+6t-1),
\]
which reduces modulo \(2\) to
\[
        1+t+t^2+t^3+t^4+t^5+t^6.
\]
Thus the full Alexander-polynomial obstruction detects exactly what the determinant misses.

\section{Conclusion}\label{sec:conclusion}

We have proved a parity dichotomy for odd-stranded Turk's head knots. If
\(p\geq3\) is odd, \(q\geq2\), and \(\gcd(p,q)=1\), then the behavior of
\(\Th(p,q)\) with respect to Klein amphichirality and equivariant rational
sliceness is controlled exactly by the parity of \(q\).

For odd \(q\), we constructed an explicit compatible \(V_4\)-symmetry on the
signed, arrowed Gauss diagram, realized it through the associated abstract link
diagram on the supporting sphere, and thickened it to a commuting pair of
ambient involutions of \(S^3\). This proves that \(\Th(p,q)\) is Klein
amphichiral, and hence equivariantly \(\mathbb Q\)-slice with respect to the
constructed strong inversion.

For even \(q\), we proved that \(\Th(p,q)\) fails the Alexander-polynomial
square condition required by equivariant \(\mathbb Q\)-sliceness. The key
algebraic input was the factorization
\[
        \Delta_{\Th(p,2m)}(t)
        \doteq
        \Delta_{\Th(p,2)}(t)H_{p,m}(t)^2,
\]
together with a modulo \(2\) proof that \(\Delta_{\Th(p,2)}(t)\) is not a
Laurent square. By the equivariant Fox--Milnor obstruction of
Di Prisa--\c{S}avk, \(\Th(p,2m)\) is therefore not equivariantly
\(\mathbb Q\)-slice with respect to any strong inversion. Since every Klein
amphichiral knot is equivariantly \(\mathbb Q\)-slice, the even-parameter knots
are not Klein amphichiral.

Thus, for odd \(p\), the coprime Turk's head knots satisfy
\[
        \Th(p,q)\text{ is Klein amphichiral}
        \quad\Longleftrightarrow\quad
        q\text{ is odd},
\]
and, for this family, they admit an equivariantly \(\mathbb Q\)-slice strong
inversion exactly in the odd-parameter case.

\section*{Acknowledgments}
The author is deeply grateful to Professor Rama Mishra for her generous
mentorship, for many valuable discussions, and for introducing the
author to the study of knot theory.  The author also thanks Professor
Seiichi Kamada for helpful comments on terminology and exposition,
especially concerning the use of standard disk--band surface language,
and Professor Makoto Sakuma for insightful feedback on the symmetry
schematic and for clarifying the fixed-point geometry of the strong
inversion.

\bibliographystyle{alpha}
\bibliography{references}

\end{document}